\numberwithin{equation}{section}
\newtheorem{theorem}{Theorem}[section]
\newtheorem{lemma}[theorem]{Lemma}
\newtheorem{corollary}[theorem]{Corollary}
\newtheorem*{theoremA}{Theorem A}
\newtheorem*{theoremB}{Theorem B}
\newtheorem*{theoremC}{Theorem C}
\newtheorem{conjectures}[theorem]{Conjectures}
\newtheorem{facts}[theorem]{Facts}
\theoremstyle{definition}
\newtheorem{definition}[theorem]{Definition}
\newtheorem{example}[theorem]{Example}
\theoremstyle{remark}
\newtheorem{remark}[theorem]{\bf{Remark}}
\newtheorem{convention}[theorem]{\bf{Convention}}
\newtheorem{notconv}[theorem]{\bf{Convention-Notation}}
\newtheorem*{prooftheoremA}{Proof of Theorem A}
\newtheorem*{prooftheoremB}{Proof of Theorem B}
\newtheorem*{prooftheoremC}{Proof of Theorem C}
\newcommand{\Fcal}{{\mathcal{F}}}
\newcommand{\Gcal}{{\mathcal{G}}}
\newcommand{\Lcal}{{\mathcal{L}}}
\newcommand{\Ncal}{{\mathcal{N}}}
\newcommand{\Rcal}{{\mathcal{R}}}
\newcommand{\ra}{{\triangleleft}}
\newcommand{\tens}{\otimes}
\renewcommand{\ra}{{\triangleleft}}
\newcommand{\id}{\mathord{\mathrm{id}}}
\newcommand{\Sym}{\mathop{\mathrm{Sym}}}
\begin{document}

\title[Square-free solutions of YBE]{{The braided group of a square-free solution of the Yang-Baxter equation and its group algebra}
\keywords{Yang-Baxter; Quantum groups; Braided group;  Group rings; Finite generation, Finite presentability}
\subjclass[2000]{Primary  16T25, 16S15. 81R50, 16W35, 16S34, 16W22, 16S37, 20B35}}
 	   	
\thanks{The author was partially supported by the Max Planck Institute for Mathematics in the Sciences (MiS) in Leipzig}

\author{Tatiana Gateva-Ivanova}
\address{American University in Bulgaria,
Blagoevgrad 2700, Bulgaria}
\email{ tatyana@aubg.edu}
\date{\today}
\begin{abstract}
Set-theoretic solutions of the Yang--Baxter equation form a meeting-ground
of mathematical physics, algebra and combinatorics. Such a solution $(X,r)$
consists
of a set $X$
and a bijective map $r:X\times X\to X\times X$ which satisfies the braid
relations.
In this work we study the braided group $G=G(X,r)$ of an involutive square-free  solution $(X,r)$ of finite order $n$ and cyclic index $p=p(X,r)$ and the group algebra $\textbf{k} [G]$ over a field $\textbf{k}$. We show that
$G$ contains a $G$-invariant normal subgroup $\Fcal_p$ of finite index $p^n$,  $\Fcal_p$ is isomorphic to
the free abelian group of rank $n$. We describe explicitly the quotient braided group $\widetilde{G}=G/\Fcal_p$ of order $p^n$ and show that $X$ is embedded in $\widetilde{G}$.
We prove that the group algebra $\textbf{k} [G]$  is a free left (resp. right) module of finite rank $p^n$ over
its commutative subalgebra $\textbf{k}[\Fcal_p]$ and give an explicit free basis. The center of $\textbf{k} [G]$ contains the
subalgebra of symmetric polynomials in $\textbf{k} [x_1^p, \cdots, x_n^p]$.  Classical results on group rings imply that $\textbf{k}[G]$ is a left (and right) Noetherian domain of finite global dimension.
\end{abstract}
\maketitle

\section{Introduction}
Let $V$ be a vector space over a field $k$. It is well-known that the
``Yang--Baxter equation'' on a linear map $R:V\tens V\to V\tens V$,
the equation
\[ R_{12}R_{23}R_{12}=R_{23}R_{12}R_{23}\]
(where $R_{i,j}$ denotes $R$ acting in the $i,j$ place in $V\tens V\tens V$),
gives rise to a linear representation of the braid group on tensor powers of
$V$. When $R^2=\id$ one says that the solution is involutive, and in this
case one has a representation of the symmetric group on tensor powers.
A particularly nice class of solutions is provided by set-theoretic solutions,
where $X$ is a set and $r:X\times X\to X\times X$ obeys similar relations on
$X\times X\times X$, \cite{Dri}. Of course, each such solution extends linearly to $V=kX$
with matrices in this natural basis having only entries from 0,1 and many
other nice properties.
During the last two decade the study of
set-theoretic solutions and related
structures notably intensified, a relevant selection of works for the
interested reader
is \cite{GIVB, ESS, CJO14, Dehornoy,  LYZ, GI04, Ru05, Takeuchi, GIM08, GI12, GIC, CGIS17, CGIS18, GI18, GI18a, BCV, Smok}, and references therein.
 We shall use the terminology,  notation and some results from
\cite{GI04, GIM08, GI12, GIC, GI18}.

 \begin{definition}
Let $X$ be a nonempty set (not necessarily finite) and let
$r: X\times X \longrightarrow X\times X$ be a bijective map.
We use notation $(X, r )$ and refer to it as \emph{a quadratic set}.
The image
of $(x,y)$ under $r$ is presented as
$r(x,y)=({}^xy,x^{y})$.
This formula defines a ``left action'' $\Lcal: X\times X
\longrightarrow X,$ and a ``right action'' $\Rcal: X\times X
\longrightarrow X,$ on $X$ as:
\[
 \Lcal_x(y)={}^xy, \quad \Rcal_y(x)= x^{y}, \quad\quad \forall x, y \in X.
\]
(i) $r$ is \emph{nondegenerate}, if
the maps $\Lcal_x$ and $\Rcal_x$ are bijective,  $\forall\; x\in X$.
(ii) $r$ is \emph{involutive} if $r^2 = id_{X\times X}$.
(iii) $(X,r)$ is called \emph{square-free} if $r(x,x)=(x,x)$,  $\forall\;  x\in
X.$
(iv) $r$ is \emph{a set-theoretic solution of the
Yang--Baxter equation} (YBE) if the braid relation
\[r^{12}r^{23}r^{12} = r^{23}r^{12}r^{23}\]
holds in $X\times X\times X,$  where  $r^{12} = r\times\id_X$, and
$r^{23}=\id_X\times r$. In this
case, see \cite{ESS} $(X,r)$ is called \emph{a braided set}.
(v) A braided set $(X,r)$ with $r$ involutive is called \emph{a
symmetric set}.
\end{definition}
\begin{convention}
\label{convention1}
In this paper "\emph{a solution}" means \emph{a nondegenerate}
\emph{symmetric set} $(X,r)$, where $X$  is a set of arbitrary
cardinality. We shall also refer to it as "\emph{a symmetric set}", keeping
the convention
that we consider \emph{only }\emph{nondegenerate} symmetric sets.
\end{convention}
\begin{definition}
\label{algobjectsdef}
To each quadratic set $(X,r)$  we associate \emph{canonical algebraic
objects}  \emph{generated by} $X$ and with \emph{quadratic defining relations}
$\Re=\Re(X,r)$, given by  \[xy=zt \in \Re\; \text{iff} \;r(x,y) = (z,t)\;\text{and}\;(x,y)\neq (z,t).\]
(i) The monoid $ S =S(X, r) = \langle X; \Re \rangle$,
with a set of generators $X$ and a set of defining relations $
\Re,$ is called \emph{the monoid associated with $(X, r)$}.
(ii) The \emph{group $G=G(X, r)$ associated with} $(X, r)$ is
defined as $G=G(X, r)={}_{gr} \langle X; \Re \rangle$.
Each element $a \in G$ can be presented as a monomial
$a = \zeta_1\zeta_2 \cdots \zeta_m,\quad \zeta_i \in X
\bigcup X^{-1}.$
We shall consider  \emph{a reduced form of} $a$, that is a
presentation with minimal length $m$. \emph{By convention}, \emph{length
of} $a$,
denoted by $|a|$ means \emph{the length of a reduced form of} $a$.
(iii) For arbitrary fixed field $\textbf{k}$, \emph{the $\textbf{k}$-algebra
associated with} $(X ,r)$ is defined as $A(\textbf{k},X,r) = \textbf{k}\langle X ;
\Re \rangle$. $A(\textbf{k},X,r)$ is a quadratic algebra isomorphic to the monoidal algebra
$\textbf{k}S(X,r)$.
(iv) Furthermore, to each nondegenerate braided set $(X,r)$ we also associate \emph{a
permutation group},
denoted $\Gcal= \Gcal(X,r)$, see Definition \ref{Gcaldef}.
 If $(X,r)$ is a solution then $G=G(X, r)$ has a canonical structure of an involutive braided group, see Facts
 \ref{factLYZ}.
In the paper we shall refer to $G$ as "\emph{the (involutive) braided group
associated to} $(X,r)$", see Definition \ref{assocBraidedGroupdef}
\end{definition}
If $(X,r)$  is a braided
set, and $G=G(X,r)$, then
the assignment $x \longrightarrow \Lcal_x$ for
$x\in X$ extends canonically to a group homomorphism
$\Lcal: G \longrightarrow \Sym(X)$,
which defines  the \emph{canonical left action} of $G$  on the set  $X$.
Analogously,  there is a \emph{canonical right action} of
$G$  on $X$, \cite{ESS}.
\begin{definition}
\label{Gcaldef}
(i) The image $\Lcal(G(X,r))$ is a subgroup of $\Sym(X)$, that is, a permutation group. We
denote it by $\Gcal=\Gcal(X,r)$, and call it \emph{the permutation group (of left actions) of}
$(X,r)$, \cite{GI04, GIC}.
$\Gcal$ is generated by the set $\{\Lcal_x \mid x \in X\}$.

(ii) If $(X, r)$ is a finite solution then \emph{the least
common multiple of all orders of permutations} $\Lcal_x \in \Gcal$, $x \in X$, is called  \emph{the cyclic degree of} $(X,r)$ and denoted by $p=p(X,r)$, see \cite{GI04}, Def 3.17.
One has $(\Lcal_x)^p= id_X$ for all $x \in X$, and $p$ is the minimal integer with this property.
\end{definition}

\begin{example}
\label{trivialsolex} For arbitrary
set $X$, $|X|\geq 2,$ denote by $\tau_X = \tau$ the flip map
$\tau(x,y)= (y,x)$ for all $x,y \in X.$ Then  $(X, \tau)$ is a square-free
solution called \emph{the trivial solution} on $X$. It is clear that an involutive quadratic set
$(X,r)$ is the trivial solution if and only if ${}^xy =y$,  for all $x,y \in X,$ or equivalently $\Lcal_x= \id_X$ for all $x\in X$. In
this case $S(X,r)$ is the free
abelian monoid generated by $X$, $G(X,r)$ is the free abelian group, $A(\textbf{k},X,r)$ is
the algebra of commutative polynomials in $X$, and $\Gcal(X,r) =
\{\id_X\}$ is the trivial group.
\end{example}

The results of \cite{GIVB}
show the close relation among various
mathematical notions and theories: the set-theoretic solutions of the
Yang-Baxter equation, the semigroups of I-type, the skew binomial polynomial rings,
introduced by the author in \cite{GI96}, and the theory of Bieberbach groups.
We shall use the following results.
\begin{theorem}
\label{theorem1}
\begin{enumerate}
\item[(I)] \cite{GIVB}.
Every $X$-generated binomial skew-polynomial ring $A= A(k, X, r) = \textbf{k}\langle X \mid \Re\rangle $ defines a square-free solution $(X,r)$. In this case
the map $r$ is defined canonically via the
 set of quadratic defining relations $\Re$ of $A$.
 Moreover, $A$ is an Artin-Schelter regular Noetherian domain of global dimension $n$.
\item[(II)] \cite{GI04}, \cite{Ru05}.  Conversely,  suppose $(X,r)$ is a finite square-free  solution then the set $X$
   can be ordered $X = \{x_1< x_2  < \cdots < x_n\}$, so that the $\textbf{k}$-algebra $A= A(\textbf{k}, X, r) = \textbf{k}\langle X \mid \Re\rangle $ over
   arbitrary field $\textbf{k}$, is
\emph{a skew polynomial ring with binomial relations} in the sense of \cite{GI96}, or shortly \emph{a binomial skew-polynomial rings} (see Def. \ref{binomialringdef}).
\end{enumerate}
\end{theorem}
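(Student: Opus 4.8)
The plan is to treat the two directions separately, since (I) and (II) are genuinely inverse constructions linking the combinatorics of $r$ to the rewriting theory of the algebra $A$. For part (I), starting from a binomial skew-polynomial ring $A = \textbf{k}\langle X \mid \Re\rangle$, I would first read off a candidate map $r$ directly from the relations: whenever $x_jx_i = x_{i'}x_{j'}\in \Re$ I set $r(x_j,x_i)=(x_{i'},x_{j'})$, and I set $r(x,x)=(x,x)$ for all $x$. The defining shape of a skew-polynomial ring (see Definition \ref{binomialringdef})---exactly one relation with leading word $x_jx_i$ for each pair $j>i$, together with the built-in nondegeneracy conditions---immediately yields that $r$ is a well-defined bijection that is involutive, nondegenerate and square-free. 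The substantive point is the braid relation. Here I would invoke the standard dictionary between the YBE for $r$ and the confluence of the reduction system determined by $\Re$: the identity $r^{12}r^{23}r^{12}=r^{23}r^{12}r^{23}$ evaluated on a triple $(x_k,x_j,x_i)$ is precisely the assertion that the two ways of reducing the single overlap word $x_kx_jx_i$ to normal form agree. Since $A$ is a binomial skew-polynomial ring, the ordered monomials $x_1^{a_1}\cdots x_n^{a_n}$ form a $\textbf{k}$-basis, which by Bergman's Diamond Lemma is exactly the statement that every such overlap ambiguity is resolvable; translating this confluence back through the dictionary produces the YBE, so $(X,r)$ is a square-free solution.

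For the homological assertions in (I), I would exhibit $A$ as a PBW deformation of the commutative polynomial ring $\textbf{k}[x_1,\ldots,x_n]$: the PBW basis forces the Hilbert series $1/(1-t)^n$, and the binomial relations realize $A$ as an iterated skew (Ore) extension, so that $A$ is a left and right Noetherian domain of global dimension $n$. Artin--Schelter regularity then follows from Koszulity of the associated quadratic algebra together with the Gorenstein condition, which one verifies on the (self-dual) Koszul complex. Most of this can be quoted verbatim from \cite{GI96, GIVB}.

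For the converse, part (II), the goal is to manufacture from a finite square-free solution $(X,r)$ an ordering of $X$ that turns $\Re$ into proper skew-polynomial relations. I would use nondegeneracy together with the consequences of the YBE for square-free symmetric sets---in particular the cyclic conditions forcing each generator to be fixed only trivially---to choose $X=\{x_1<\cdots<x_n\}$ so that for every relation $x_jx_i = {}^{x_j}x_i\, x_j^{x_i}$ with $j>i$ the left-hand monomial is strictly larger than the right-hand one in the induced degree-lexicographic order. One then checks the nondegeneracy and consistency axioms of Definition \ref{binomialringdef}; the crucial input is that the YBE satisfied by $r$ supplies exactly the overlap resolvability needed for $\Re$ to be a Gröbner basis, equivalently for the ordered monomials $x_1^{a_1}\cdots x_n^{a_n}$ to be a $\textbf{k}$-basis of $A$.

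The main obstacle in (I) is the clean verification of the YBE--confluence dictionary---ensuring that the bookkeeping of the left and right actions $\Lcal_x,\Rcal_x$ matches the reduction of the single overlap $x_kx_jx_i$---while in (II) the real work is extracting a compatible order from the intrinsic combinatorics of the solution and confirming that the resulting relations are genuinely of skew-polynomial (Gröbner) type. Both difficulties are resolved in the cited references \cite{GI96, GIVB, GI04, Ru05}, which is why the statement can be recorded here as a tool rather than re-proved in detail.
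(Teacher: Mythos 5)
Your proposal should be read against the fact that the paper itself offers no proof of this statement: Theorem \ref{theorem1} is recorded as background, part (I) being attributed to \cite{GIVB} and part (II) to \cite{GI04} and \cite{Ru05}, and the rest of the paper simply invokes it. Your decision to treat the theorem as a quoted tool, identifying the two genuinely hard points (the YBE--Gr\"obner basis dictionary in (I), the existence of a compatible enumeration in (II)) and deferring them to the same references, therefore matches the paper's own treatment, and your reconstruction of the overall architecture is essentially the one in the literature. One refinement for part (II): the compatible order is not extracted from nondegeneracy and the cyclic conditions alone; the decisive external input is Rump's decomposition theorem for finite square-free solutions \cite{Ru05}, which permits an induction producing the enumeration --- this is exactly why \cite{Ru05} is cited alongside \cite{GI04}.

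However, one step of your sketch is concretely false and cannot be repaired as written: the claim that ``the binomial relations realize $A$ as an iterated skew (Ore) extension.'' Consider the square-free solution on $X=\{x_1,x_2,x_3,x_4\}$ with $\Lcal_{x_1}=\Lcal_{x_2}=(x_3\,x_4)$ and $\Lcal_{x_3}=\Lcal_{x_4}=(x_1\,x_2)$; its relations are $x_2x_1=x_1x_2$, $x_4x_3=x_3x_4$, $x_3x_1=x_2x_4$, $x_3x_2=x_1x_4$, $x_4x_1=x_2x_3$, $x_4x_2=x_1x_3$, and one checks directly that this is a binomial skew polynomial ring for the order $x_1<x_2<x_3<x_4$. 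Here no three of the generators span a subalgebra (e.g.\ $x_3x_1=x_2x_4$ expels $\{x_1,x_2,x_3\}$ from itself, and similarly for the other three-element subsets), so $A$ is not an Ore extension $B[x;\sigma,\delta]$ with $B$ generated by three of the $x_i$, under any enumeration; consequently Noetherianity, the domain property and global dimension $n$ cannot be obtained by that route. The references prove them differently: \cite{GI96} shows $A$ is a free module of finite rank $p^n$ over the commutative polynomial subalgebra $A_p$ (the same Lemma 4.19/4.20 machinery this paper reuses in Lemma \ref{S-lemma} and in the proof of Theorem C), which gives Noetherianity, while the Artin--Schelter regularity, Koszulity and the domain property come from the $I$-type structure established in \cite{GIVB}. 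A smaller but related caveat concerns your ``dictionary'' in (I): the braid relation must hold on \emph{all} triples in $X^3$, not only decreasing ones, and reducing the overlap $x_kx_jx_i$ to normal form takes more than one application of $r^{12},r^{23}$ per side, so the YBE is not literally the resolvability of that single ambiguity; bridging the two (via \textbf{lri} and the cyclic conditions, or via the $I$-type property as in \cite{GIVB}) is precisely the nontrivial content of the cited theorems, not a standard formality.
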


The main results of the paper are Theorems \textbf{A}, \textbf{B}, and \textbf{C} which are proven under the following  assumptions, notation and convention.
\begin{notconv}
\label{notation}
$(X, r)$ is \emph{a square-free solution of finite order}  $|X|= n$, and with \emph{a cyclic degree} $p=p(X,r)$, see Def \ref{Gcaldef}. $\textbf{k}$ is a field. We fix an enumeration
$X = \{x_1< x_2  < \cdots < x_n\}$ on $X$, so that the associated Yang-Baxter $\textbf{k}$-algebra $A= A(\textbf{k}, X, r) = \textbf{k}\langle X \mid \Re\rangle $ is
\emph{a skew polynomial ring with binomial relations}, $\Ncal$ is the normal $\textbf{k}$-basis of $A$,  see Definition \ref{binomialringdef}, and Theorem \ref{theorem1}.
$G=G(X,r)$,  $\Gcal= \Gcal(X,r)$, and $S=S(X,r)$ are the associated groups and monoid, see Def \ref{algobjectsdef}. $(G,r_G)$, and
$(\Gcal, r_{\Gcal})$ are the corresponding involutive braided groups (see Facts \ref{factLYZ}).
We introduce the sets
\begin{equation}
\label{eqY}
\begin{array}{lll}
Y   &:= & \{x_1^{\alpha_1}\cdots x_n^{\alpha_n} \mid 0 \leq \alpha_1 \leq p-1\} \subset \Ncal;\\
X_p &:= & \{x_1^p,  x_2^p, \cdots, x_n^p\} \subset \Ncal ;\\
N_p &:= & [X_p] \subseteq S   \quad\text{is \emph{the submonoid of} S \emph{generated by}}\; X_p.
\\
\Fcal_p &:= & {}_{gr}[x_1^p, \cdots, x_n^p] \leq G \quad\text{is \emph{the subgroup of} G \emph{generated by}}\; X_p.\\
A_p &:=& \textbf{k}N_p \;\text{is \emph{the semigroup algebra of}}\; N_p, \quad\text{equivalently}, \\
A_p && \text{is the subalgebra of} \;A, \;\text{generated by}\; X_p.
\end{array}
\end{equation}
Each element $W \in N_p$ will be considered in its normal form, so we identify (as sets) $N_p$ and $\Ncal_p := N_p\bigcap \Ncal$.
\end{notconv}

\begin{theoremA}
Let $(X, r)$ be a square-free solution of finite order  $|X|= n$ and cyclic degree $p=p(X,r)$, let $G=G(X,r)$.
\begin{enumerate}
\item
\label{Thm_Fcal41} Every element $u\in G$ has unique presentation as
\[u =y.W, \quad \text{where} \; y \in Y, \; W\in \Fcal_p.\]
\item
\label{Thm_Fcal42} In particular,
for every pair
$y,z \in Y$ there exist
uniquely determined $t \in Y$, and $W \in \Fcal_p$, such that the equality
\begin{equation}
\label{producteq}
y.z = t.W \;\text{holds in}\; G.
\end{equation}
\item
\label{Thm_Fcal43}
 $G$ splits as a union of $p^n$ disjoint subsets
\begin{equation}
\label{disjuneq}
G = \bigcup_{y \in Y} y \Fcal_p, \quad \text{where}\;\; (y \Fcal_p) \bigcap (t \Fcal_p) = \emptyset, \; \forall \;  y \neq t, \; y, t \in Y.
\end{equation}
\end{enumerate}
\end{theoremA}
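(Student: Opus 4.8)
The plan is to transport the entire statement to the free abelian group $\Z^n$ by means of the \emph{bijective $1$-cocycle} furnished by the I-type structure of $G$. By Facts~\ref{factLYZ} (see also \cite{GIVB}) the braided group $G=G(X,r)$ of a finite symmetric set is a group of I-type: after fixing the identification $x_k\leftrightarrow k$, there is a bijection $\pi\colon G\to\Z^n$ with $\pi(x_i)=u_i$ (the standard basis) which is a $1$-cocycle for the action of $G$ on $\Z^n$ through $\Gcal$, i.e. $\pi(ab)=\pi(a)+a\cdot\pi(b)$, where $a\cdot u_j=u_{\Lcal_a(j)}$. I would first record two consequences of the hypotheses. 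Square-freeness gives ${}^{x_i}x_i=x_i$, so $\Lcal_{x_i}$ fixes $i$, and iterating the cocycle identity yields
\[
\pi(x_i^p)=\sum_{k=0}^{p-1}(\Lcal_{x_i})^k\cdot u_i=\sum_{k=0}^{p-1}u_i=p\,u_i .
\]
The definition of the cyclic degree gives $(\Lcal_{x_i})^p=\id_X$, whence $\Lcal_{x_i^p}=\id_X$ and $\Fcal_p\subseteq\ker(\Lcal\colon G\to\Gcal)$.

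Next I would pin down $\Fcal_p$ and its index. Since every $W\in\Fcal_p$ acts trivially on $\Z^n$, the cocycle identity degenerates to $\pi(WW')=\pi(W)+\pi(W')$, so $\pi|_{\Fcal_p}$ is an injective group homomorphism whose image, by the computation above, is the subgroup generated by the $p\,u_i$, namely $p\Z^n$. Hence $\Fcal_p\cong p\Z^n\cong\Z^n$ is free abelian of rank $n$ and $\Fcal_p=\pi^{-1}(p\Z^n)$. Normality follows directly from the cocycle: for $W\in\Fcal_p$ one gets $\pi(gWg^{-1})=g\cdot\pi(W)\in p\Z^n$ (using $g\cdot\pi(g^{-1})=-\pi(g)$ and that $\Gcal$ permutes the basis, hence preserves $p\Z^n$), so $gWg^{-1}\in\Fcal_p$. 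Because $\pi$ carries the coset $g\Fcal_p$ bijectively onto $\pi(g)+p\Z^n$, it descends to a bijection $\overline{\pi}\colon G/\Fcal_p\to\Z^n/p\Z^n\cong(\Z/p)^n$; in particular $[G:\Fcal_p]=p^n=|Y|$.

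It then remains to prove that $Y\to G/\Fcal_p$, $y\mapsto y\Fcal_p$, is a bijection, and by the index count it suffices to prove surjectivity. Here I would use the monoid $S=S(X,r)$, which generates $G$; its image in the finite group $G/\Fcal_p$ is a submonoid, hence a subgroup, hence all of $G/\Fcal_p$, so every class is represented by a normal monomial $x_1^{\alpha_1}\cdots x_n^{\alpha_n}$ with $\alpha_i\ge 0$. Writing $\alpha_i=pq_i+r_i$ with $0\le r_i\le p-1$ and using $(x_i^p)^{q_i}\in\Fcal_p$ together with the normality of $\Fcal_p$, I can sweep all the $p$-th-power factors to the right modulo $\Fcal_p$, obtaining $x_1^{\alpha_1}\cdots x_n^{\alpha_n}\Fcal_p=x_1^{r_1}\cdots x_n^{r_n}\Fcal_p$ with $x_1^{r_1}\cdots x_n^{r_n}\in Y$. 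Thus $Y$ meets every coset, and surjectivity together with $|Y|=p^n=[G:\Fcal_p]$ forces bijectivity. This is exactly the disjoint decomposition \eqref{disjuneq} of part~(3); the unique presentation $u=y.W$ of part~(1) is the assertion that $Y$ is a transversal for $\Fcal_p$, and part~(2) is its specialization to $u=y.z$.

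The conceptual content lies in the first two paragraphs and is clean once the $1$-cocycle is available. The one point demanding care is the bookkeeping in the surjectivity step: the reduction of exponents modulo $p$ and the sweeping of the factors $(x_i^p)^{q_i}$ to the right must be carried out purely modulo the \emph{normal} subgroup $\Fcal_p$, so that no explicit control over how $p$-th powers commute past generators is needed. I expect the only genuine external input to be the I-type/bijective-cocycle theorem, already recorded in the cited Facts.
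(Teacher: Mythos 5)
Your proposal is correct, but it follows a genuinely different route from the paper's proof. The paper stays inside the braided-group formalism: it writes $u=ab^{-1}$ with $a,b\in S$ using the \"{O}re/group-of-quotients structure (Remark \ref{remark1}), invokes the monoid-level decomposition $v=y.W=W^{\prime}.y$ of Lemma \ref{S-lemma} (adapted from \cite{GI96}, Lemma 4.19), and then merges the pieces inside $G$ itself using the braided identities --- the compatibility condition \textbf{M3} together with the socle containment $\Fcal_p\leq \Gamma$ (so ${}^{W_1}t=t$) and the $G$-invariance of $\Fcal_p$ (so $(W_1)^t\in\Fcal_p$) --- with uniqueness reduced to uniqueness of normal forms in $S$. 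You instead transport the whole problem to $\Z^n$ via the bijective $1$-cocycle $\pi$ coming from the I-type structure, identify $\Fcal_p=\pi^{-1}(p\Z^n)$ (your computation $\pi(x_i^p)=p\,u_i$ is exactly where square-freeness enters, and $\Lcal_{x_i^p}=\id_X$ is where the cyclic degree enters), deduce normality and $[G:\Fcal_p]=p^n$ from cocycle arithmetic, and finish with the observation that the image of $S$ in the finite group $G/\Fcal_p$ is a submonoid, hence a subgroup, hence everything, after which reduction of exponents modulo the \emph{normal} subgroup $\Fcal_p$ shows $Y$ meets every coset and counting forces bijectivity; this last step correctly replaces the paper's explicit commutation of $p$-th powers past generators. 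Your route is shorter and yields, as byproducts, the normality of $\Fcal_p$ and the index $p^n$, which the paper only establishes in Theorem B; its price is a heavier external input, the I-type/bijective-cocycle theorem of \cite{GIVB} (note this is what is recorded in Remark \ref{remark1}, not in Facts \ref{factLYZ} as you cite), and it produces none of the explicit normal-form machinery: the paper's Lemma \ref{S-lemma} (monoid decomposition, $G$-invariance, $\Fcal_p\leq\Gamma$) is reused in the proofs of Theorems B and C (the ideal chain in the braided group, and the freeness of $\textbf{k}[G]$ over $\textbf{k}[\Fcal_p]$), so on your route those statements would still have to be proved separately.
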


\begin{theoremB}
\label{Thm_Fcal}
Let $(X, r)$ be a square-free solution of finite order  $|X|= n$ and cyclic degree $p=p(X,r)$, let $(G, r_G)$ be the associated braided group,
where $G=G(X,r)$.
\begin{enumerate}
\item
\label{Thm_Fcal2a}
$\Fcal_p $ is an ideal of $(G, r_G)$ and there is a chain of ideals $\Fcal_p \;\triangleleft \;\Gamma\;\triangleleft\; G$, in $G$.
\item
\label{Thm_Fcal06a}
The quotient group $\widetilde{G}= G/\Fcal_p$ is an involutive braided group of finite order $p^n$.
More precisely, the canonical projection
\[\pi: G \longrightarrow \widetilde{G}= G/\Fcal_p, \quad g = y W \mapsto \tilde{g}=\tilde{y}\] induces the structure of an involutive braided group
on the finite set \[\widetilde{G} = \tilde{Y} :=\{\tilde{y}\mid y \in Y\},\]
such that the induced operation "$\cdot$" on $\tilde{Y}$ is well-defined via (\ref{producteq}) as:
\begin{equation}
\label{operationeq}
\tilde{y}\cdot\tilde{z}:= \tilde{t},\; \text{where}\;\; y.z = t. W, \; \;\text{holds in}\;\; G, \; y, z, t \in Y,\;\; W \in \Fcal_p.
\end{equation}
The  restriction $\pi_{|Y} : Y \longrightarrow \tilde{Y} =\widetilde{G}$ is a bijective map of sets of order $p^n$.
\item
\label{Thm_Fcal06b}
The map  $X \longrightarrow \tilde{X} \subseteq \tilde{G}$ is an embedding of $X$ into the braided group $\tilde{G}$,
we shall identify the sets $X$ and $\tilde{X}$.  Thus, $X$ is a set of generators of the braided group $\tilde{G}$ of order $p^n$.
\item
\label{Thm_Fcal06c}
There is a canonical epimorphism of braided groups $\tilde{G}\twoheadrightarrow\Gcal \simeq G /\Gamma$, so the order $|\Gcal|$
divides $p^n$.  Moreover, if $p$ is a prime number, then
$\Gcal$ is a $p$-group.
\end{enumerate}
\end{theoremB}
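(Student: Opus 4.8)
The plan is to reduce everything to the bijective $1$-cocycle attached to the group of I-type $G=G(X,r)$. Write $\sigma\colon G\to\Gcal$, $g\mapsto\Lcal_g$, and let $G$ act on $\Z^n=\bigoplus_{i=1}^n\Z e_i$ by permuting the basis via $\sigma(g)e_i:=e_{\Lcal_g(i)}$. By Theorem \ref{theorem1} and the braided-group structure of Facts \ref{factLYZ}, there is a bijection $v\colon G\to\Z^n$ with $v(x_i)=e_i$ and $v(gh)=v(g)+\sigma(g)v(h)$. Square-freeness gives $\Lcal_{x_i}(i)=i$, whence $\sigma(x_i)e_i=e_i$ and, inductively, $v(x_i^k)=k\,e_i$; in particular $v(x_i^p)=p\,e_i$. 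Since $\Lcal_{x_i^p}=(\Lcal_{x_i})^p=\id_X$ by the definition of the cyclic degree $p$, every generator $x_i^p$ lies in $\Gamma:=\ker(\sigma)$, so $\Fcal_p\subseteq\Gamma$; and as $\sigma$ is trivial on $\Fcal_p$, the restriction $v|_{\Fcal_p}$ is a group homomorphism with image $(p\Z)^n$, reproving that $\Fcal_p$ is free abelian of rank $n$.

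For part (\ref{Thm_Fcal2a}) recall that an ideal of a braided group is a normal subgroup invariant under the left and right actions ${}^g(-)$ and $(-)^g$ — equivalently the kernel of a morphism of braided groups, in which case the quotient inherits a braided structure. Since $G\twoheadrightarrow\Gcal$ is such a morphism, $\Gamma=\ker\sigma\triangleleft G$ is an ideal. To see that $\Fcal_p$ is an ideal I would first prove the intertwining identity $v({}^g h)=\sigma(g)\,v(h)$ for all $g,h\in G$, by induction on the length of $h$ using the matched-pair relation ${}^g(h_1h_2)={}^g h_1\cdot{}^{g^{h_1}}h_2$, the braided factorization $gh={}^g h\cdot g^h$, and that $\sigma$ is a homomorphism. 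It then follows that ${}^g(x_i^p)=x_{\Lcal_g(i)}^p\in X_p$, so the left action permutes $X_p$ and hence preserves $\Fcal_p={}_{gr}[X_p]$; the right action is handled by the symmetric computation. Normality is immediate from the cocycle: for $w\in\Fcal_p\subseteq\Gamma$ one has $\sigma(w)=\id$, so $v(gwg^{-1})=\sigma(g)v(w)\in(p\Z)^n$ and $gwg^{-1}\in\Fcal_p$. Thus $\Fcal_p$ is a $G$-invariant normal subgroup, i.e.\ an ideal, and a fortiori an ideal of $\Gamma$, giving the chain $\Fcal_p\triangleleft\Gamma\triangleleft G$. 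I expect this intertwining step — equivalently, the assertion that $r_G$ descends to the quotient — to be the main obstacle, since it is exactly what distinguishes an ideal from a merely normal subgroup.

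For parts (\ref{Thm_Fcal06a})–(\ref{Thm_Fcal06b}) I would invoke Theorem A directly. Part (\ref{Thm_Fcal43}) of Theorem A exhibits $G$ as the disjoint union of the $p^n$ cosets $y\Fcal_p$, $y\in Y$, so $|\widetilde G|=p^n$, and part (\ref{Thm_Fcal41}) shows $\pi|_Y\colon Y\to\tilde Y$ is a bijection (the $y\in Y$ form a full transversal). Because $\Fcal_p$ is an ideal, $r_G$ descends to a braided operation $r_{\widetilde G}$ on $\widetilde G=G/\Fcal_p$, involutive since it is induced from $r_G$ with $r_G^2=\id$; the induced product is computed by writing the class of $yz$ in normal form: by part (\ref{Thm_Fcal42}) of Theorem A, $yz=tW$ with $t\in Y$, $W\in\Fcal_p$, so $\tilde y\cdot\tilde z=\tilde t$, which is exactly (\ref{operationeq}). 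For part (\ref{Thm_Fcal06b}), since $p\geq 2$ each $x_i$ (exponent $1\le p-1$) lies in $Y$, so $X\subseteq Y$; injectivity of $\pi|_Y$ forces $\pi|_X$ to be injective, embedding $X$ into $\widetilde G$, and as $X$ generates $G$ the image $\tilde X$ generates $\widetilde G$.

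Finally, part (\ref{Thm_Fcal06c}) follows from the chain by the third isomorphism theorem: the inclusion $\Fcal_p\triangleleft\Gamma$ yields a braided-group epimorphism $\widetilde G=G/\Fcal_p\twoheadrightarrow G/\Gamma\cong\Gcal$. As $\widetilde G$ is finite of order $p^n$, Lagrange gives that $|\Gcal|$ divides $p^n$; and when $p$ is prime, $p^n$ is a prime power, so $|\Gcal|$ is a power of $p$ and $\Gcal$ is a $p$-group.
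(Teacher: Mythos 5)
Your proof is correct, and for the heart of the theorem --- part (\ref{Thm_Fcal2a}) --- it takes a genuinely different route from the paper. The paper never introduces a cocycle: it proves the $G$-invariance of $\Fcal_p$ and the inclusion $\Fcal_p\leq\Gamma$ entirely inside the braided-group formalism (Lemma \ref{S-lemma}, parts (\ref{S-lemma4}) and (\ref{S-lemma5})), using the identities ${}^a(x^k)=({}^ax)^k$ of Lemma \ref{VIPLemma07} together with \textbf{ML1}, \textbf{ML2} and an induction on word length, and then obtains normality from \textbf{M3} via $(aW)a^{-1}=({}^aW)(a^W)a^{-1}=({}^aW)(aa^{-1})={}^aW$. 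You instead identify $\Fcal_p=v^{-1}\bigl((p\Z)^n\bigr)$ as the preimage of a characteristic sublattice under the bijective $1$-cocycle $v$, after which invariance, normality, $\Fcal_p\subseteq\Gamma$, and freeness of rank $n$ all become linear algebra over $\Z$; your computation $v(gwg^{-1})=\sigma(g)v(w)$ is a clean substitute for the paper's \textbf{M3} argument, and your lattice picture makes transparent facts the paper proves one at a time. What this costs is that the work is relocated rather than removed: the intertwining identity $v({}^gh)=\sigma(g)v(h)$ --- which you rightly flag as the crux --- requires essentially the same induction (\textbf{ML2} for the step, \textbf{M3} to rewrite $\sigma({}^gh_1)\sigma(g^{h_1})=\sigma(g)\sigma(h_1)$) that the paper runs in Lemma \ref{S-lemma}(\ref{S-lemma5}), and your sketch omits the base case for inverse letters $x^{-1}\in X^{\star}$, which needs the \textbf{lri} identities of Lemma \ref{VIPLemma07} (this is precisely where square-freeness enters, just as in the paper). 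Two attributions should be repaired: the bijective cocycle with $v(gh)=v(g)+\sigma(g)v(h)$ is not what Theorem \ref{theorem1} states, but is the $I$-type structure recalled in Remark \ref{remark1} \cite{GIVB, ESS}; and your definition $\Gamma:=\ker\sigma$ must be reconciled with the paper's $\Gamma$ (the socle, i.e.\ the kernel of the action of $G$ on itself), which is the identification $G/\Gamma\simeq\Gcal$ of Remark \ref{remark2} \cite{So}. Parts (\ref{Thm_Fcal06a})--(\ref{Thm_Fcal06c}) of your argument coincide with the paper's: both deduce them from Theorem A, the ideal-quotient theory of \cite{GI18}, and the third isomorphism theorem; indeed you are slightly more careful than the paper in noting that $X\subseteq Y$ requires $p\geq 2$.
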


\begin{theoremC}
\label{Thm_FcalB}
Let $(X, r)$ be a square-free solution of finite order $|X|= n$ and cyclic degree $p=p(X,r)$, $G=G(X,r)$.
Let $\textbf{k}[G]$ and $\textbf{k}[\Fcal_p]$ be the corresponding group algebras over a field $\textbf{k}$.
The following conditions are in force.
\begin{enumerate}
\item
  \label{Thm_Fcal4}
$\textbf{k}[\Fcal_p]$ is a commutative Noetherian domain. $\textbf{k}[G]$ is a free left (resp. right) module of finite rank
$p^n$
over the algebra $\textbf{k}[\Fcal_p]$
with a free basis the set $Y$.
  \item
 \label{Thm_Fcal2}
 Let $\mathfrak{S}$ be the subalgebra of $\textbf{k}[G]$ (and of $A$) consisting of all symmetric polynomials $f(x_1^p, \cdots, x_n^p) \in A_p$ in n variables $x_1^p, \cdots x_n^p$.
  Then $\mathfrak{S}$ is an $n$-generated subalgebra contained in the center of $\textbf{k}[G]$ (and of $A$).
 \item
  \label{Thm_Fcal5}
 $\textbf{k}[G]$ is a  left and a right Noetherian domain. Moreover, $\textbf{k}[G]$ has finite global dimension, whenever $\textbf{k}$ is a field of characteristic $0$.
 \end{enumerate}
 \end{theoremC}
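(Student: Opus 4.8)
The plan is to treat the three parts in turn, taking Theorems A and B as the structural backbone and then reducing each ring-theoretic conclusion to a classical fact about group algebras. For part (1) I would first note that $\Fcal_p$ is free abelian of rank $n$: it is generated by the pairwise commuting elements $x_1^p,\dots,x_n^p$, and Theorem B identifies it with $\Z^n$; hence $\textbf{k}[\Fcal_p]\cong \textbf{k}[t_1^{\pm 1},\dots,t_n^{\pm 1}]$, a commutative Noetherian domain. The module statement is then immediate from Theorem A: the unique decomposition $u=y.W$ with $y\in Y$, $W\in\Fcal_p$, together with the disjoint union $G=\bigcup_{y\in Y}y\Fcal_p$, says exactly that $Y$ is a complete set of representatives of the cosets of the normal subgroup $\Fcal_p$. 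Since $\Fcal_p$ is normal, left and right cosets coincide, so $Y$ is simultaneously a free basis for $\textbf{k}[G]$ as a left and as a right $\textbf{k}[\Fcal_p]$-module, of rank $|Y|=p^n$.

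For part (2), $n$-generation is the fundamental theorem of symmetric functions: $\mathfrak{S}$ is generated by the elementary symmetric polynomials $e_1,\dots,e_n$ in $x_1^p,\dots,x_n^p$. The real content is centrality, which I would deduce from the claim that conjugation by every $g\in G$ permutes $X_p=\{x_1^p,\dots,x_n^p\}$. The tool is the bijective $1$-cocycle $\pi\colon G\to\Z^n$ afforded by the I-structure of $(G,r_G)$, satisfying $\pi(gh)=\pi(g)+\Lcal_g(\pi(h))$, with $\Gcal$ acting on $\Z^n$ by coordinate permutations. Square-freeness gives ${}^{x_i}x_i=x_i$, whence $\Lcal_{x_i}(e_i)=e_i$ and, inductively, $\pi(x_i^p)=p\,e_i$; thus $\pi$ carries $\Fcal_p$ onto $(p\Z)^n$. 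Because $(\Lcal_{x_i})^p=\id_X$, we have $\Fcal_p\subseteq\Gamma=\ker\Lcal$ (the chain of Theorem B), so $\Fcal_p$ acts trivially through $\Gcal$; the cocycle identity then collapses to $\pi(gx_i^pg^{-1})=\Lcal_g(\pi(x_i^p))=p\,e_{\sigma_g(i)}=\pi(x_{\sigma_g(i)}^p)$, where $\sigma_g\in\Gcal$ is the image of $g$. Injectivity of $\pi$ yields $gx_i^pg^{-1}=x_{\sigma_g(i)}^p$. Consequently any symmetric polynomial in the $x_i^p$ is invariant under all inner automorphisms, hence commutes with every generator and lies in the center of $\textbf{k}[G]$; and since the monoid $S$ embeds in $G$ we have $\mathfrak{S}\subseteq A_p\subseteq A\hookrightarrow\textbf{k}[G]$, so it is central in $A$ as well. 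This centrality step is, I expect, the main obstacle: everything hinges on identifying the conjugation action of $G$ on $\Fcal_p$ with the permutation action of $\Gcal$ on $X_p$, and on pinning down the cocycle bookkeeping tightly enough to force the \emph{exact} equality $gx_i^pg^{-1}=x_{\sigma_g(i)}^p$ rather than merely an equality modulo $\Fcal_p$.

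For part (3) I would record that $G$ is finitely generated (by $X$) and, by Theorems A and B, contains the normal free abelian subgroup $\Fcal_p\cong\Z^n$ of finite index $p^n$, so $G$ is polycyclic-by-finite; P.\,Hall's theorem then gives that $\textbf{k}[G]$ is left and right Noetherian. For the domain property I would use that $G$ is torsion-free --- being a group of I-type, equivalently a Bieberbach group, by Theorem \ref{theorem1} and \cite{GIVB} --- and invoke the classical theorem (Farkas--Snider and its extensions) that the group algebra of a torsion-free polycyclic-by-finite group has no zero divisors. Finally, since $\Fcal_p\triangleleft G$ with finite quotient $\widetilde{G}=G/\Fcal_p$, the algebra $\textbf{k}[G]$ is a crossed product $\textbf{k}[\Fcal_p]\ast\widetilde{G}$; in characteristic $0$ the order $p^n=|\widetilde{G}|$ is invertible, and the classical crossed-product theorem equates the global dimension of $\textbf{k}[G]$ with that of the regular ring $\textbf{k}[\Fcal_p]$, namely $n<\infty$.
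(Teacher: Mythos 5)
Your proposal is correct, and in parts (1) and (2) it follows a genuinely different route from the paper's proof. In part (1) the paper does not use the transversal argument: it supposes a relation $g_1y_1+\cdots+g_qy_q=0$ with $g_i\in\textbf{k}[\Fcal_p]$, left-multiplies by a monomial $W_0\in N_p$ chosen so that every $W_0g_i$ lands in $A_p\subset A=\textbf{k}S$, invokes the theorem of \cite{GI96} that $A$ is a free left $A_p$-module with basis $Y$, and finally cancels $W_0$ using the fact that $\textbf{k}[G]$ is a domain (Lemma \ref{VIPprop}). Your observation that Theorem A together with normality of $\Fcal_p$ (Theorem B) exhibits $Y$ as a two-sided transversal, so that freeness is the textbook fact that a group algebra is free over any subgroup algebra on a set of coset representatives, is more elementary and removes both of those dependencies. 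In part (2) the paper never leaves the braided group: it proves $s_k\cdot y=y\cdot s_k$ for the power sums $s_k$ directly from \textbf{M3}, the identity $(x^k)^a=(x^a)^k$ of Lemma \ref{VIPLemma07}, the relation ${}^{x^{kp}}y=y$, and nondegeneracy of the right action; the pivotal step there, $x_i^{kp}\,y=y\,((x_i)^y)^{kp}$, is exactly your statement that conjugation permutes the $p$-th powers, derived from the internal braided-group identities rather than from the bijective $1$-cocycle of \cite{GIVB,ESS}. Your cocycle bookkeeping ($\pi(x_i^p)=p\,e_i$ via square-freeness, triviality of the $\Fcal_p$-action since $\Fcal_p\le\Gamma$, the collapse $\pi(gx_i^pg^{-1})=\Lcal_g(\pi(x_i^p))$, injectivity of $\pi$) is all correct, so both derivations are sound. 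A side benefit of your version: you generate $\mathfrak{S}$ by the elementary symmetric polynomials, which is valid over every field, whereas the paper generates it by the first $n$ power sums, a claim valid only in characteristic $0$ (for $n=2$ in characteristic $2$, $X_1X_2$ is not a polynomial in $X_1+X_2$ and $X_1^2+X_2^2=(X_1+X_2)^2$); the paper's centrality computation survives in positive characteristic, but its generation claim needs exactly the elementary-symmetric repair you supply. Part (3) is essentially the paper's own argument (Hall for Noetherianity, Farkas--Snider and Kropholler--Linnell--Moody for the domain property of the torsion-free polycyclic-by-finite group $G$), except for finite global dimension, where the paper cites the Farkas--Snider syzygy lemma while you apply the Maschke-type crossed-product theorem to $\textbf{k}[G]=\textbf{k}[\Fcal_p]\ast\widetilde{G}$ with $|\widetilde{G}|=p^n$ invertible in characteristic $0$; both are classical and yield global dimension $n$.
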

 The paper is organized as follows.
In Section 2 we recall some basic definitions and results which will be used throughout the paper.
In Sec 3.1 we give a short introduction to square-free solutions $(X,r)$ and their algebraic objects. In particular,
we present some useful formulae.
 In Section 3.2. we study the braided group $G(X,r)$ and prove Theorems A and B.
The group ring $\textbf{k} [G]$ over a field $\textbf{k}$ is studied in Section 3.3, where we prove theorem C.

\section{Preliminaries on braided groups}

 In this section $(X,r)$ denotes a nondegenerate braided set, where the set $X$ has arbitrary cardinality.
 Lu, Yan and Zhu, \cite{LYZ} proposed a general way of constructing set-theoretical solutions of the
 Yang-Baxter equation using braiding operators on groups, or essentially,  the matched pairs of groups.
In his  survey  Takeuchi \cite{Takeuchi}  gave an introduction to
 the ESS-LYZ theory, reviewing the main results in \cite{LYZ,ESS},  from a matched pair of groups point of view, among them a good way
 to
 think about the
 properties of the group $G(X,r)$ universally generated from $(X,r)$. In particular, it is known that the group $G(X,r)$ is itself a
 braided set
 in an induced manner.

\begin{definition}\cite{LYZ}
\label{def_SymmetricGr}
\emph{A braided group} is a pair  $(G, \sigma)$, where $G$ is a group and
\begin{equation}
\label{leftacteq}
\sigma: G\times G \longrightarrow G\times G, \quad \sigma(a, u) = ({}^au,
a^u)
\end{equation}
is a bijective map, such that the left and the right actions induced by $\sigma$ via (\ref{leftacteq}) satisfy the following
conditions for all  $a,b, u,v\in G$:
\[\begin{array}{llllll}
\textbf{ML0}:&{}^a1 =1, \; {}^1u =u,\quad & \textbf{ML1}:&{}^{ab}u ={}^a{({}^bu)},\quad &\textbf{ML2}:&{}^a{(u.v)} =({}^au)({}^{a^u}v),
\\
\textbf{MR0}:& 1^u =1,
\; a^1 =a, & \textbf{MR1}:& a^{uv} =(a^u)^v,
&\textbf{MR2}:& (a.b)^u
=(a^{{}^bu})(b^u),
\end{array}
\]
and
\emph{the compatibility condition} \textbf{M3}:
\[\textbf{M3}: \quad uv =({}^uv)(u^v), \;\text{is an equality in}\; G.
\]
If the map $\sigma$ is involutive ($\sigma ^2 = id_{G \times G}$) then $(G,
\sigma)$ is \emph{an involutive braided group}, also called "\emph{a symmetric group}",  see \cite{Takeuchi}, and \cite{GI18}. The notion of a "symmetric group", was suggested by Takeuchi, as a group analogue of a symmetric set \cite{Takeuchi}.
\end{definition}

\begin{facts}
\label{factLYZ}
(1) \cite{LYZ} If $(G, \sigma)$ is a braided group, then $\sigma$ satisfies the braid
relations and is nondegenerate. So  $(G, \sigma)$ forms a
nondegenerate
braided set in the sense of \cite{ESS}, $(G, \sigma)$ is a symmetric set, whenever $\sigma$ is involutive.

Assume that $(X,r)$ is a nondegenerate braided set, $G = G(X,r)$ and $S=S(X,r)$.

(2) \cite{LYZ} There is unique braiding operator
$r_G: G \times G \longrightarrow G \times G $, such that the restriction of
$r_G$ on $X\times X$ is exactly the map $r$.
Furthermore, $(r_G)^2 = id_{G\times G}$ \emph{iff} $r^2 = id_{X\times X}$, so $(G, r_G)$ is a
symmetric group \emph{iff } $(X,r)$ is a symmetric set.

(3) \cite{GIM08} There is unique braiding operator
$r_S: S \times S \longrightarrow S \times S$, such that the restriction of
$r_S$ on $X\times X$ is exactly the map $r$, so $(S, r_S)$ is a braided monoid, and, in particular, a braided set.
\end{facts}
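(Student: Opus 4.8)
The three statements are recalled from \cite{LYZ} and \cite{GIM08}; I sketch the arguments that establish them. For part (1), I would first extract nondegeneracy directly from the multiplicative axioms. Writing $\Lcal_a(u)={}^au$ and $\Rcal_u(a)=a^u$ for the left and right translations induced by $\sigma$, the axioms \textbf{ML1} and \textbf{ML0} give $\Lcal_{ab}=\Lcal_a\circ\Lcal_b$ and $\Lcal_1=\id_G$; hence $\Lcal_a\circ\Lcal_{a^{-1}}=\Lcal_1=\Lcal_{a^{-1}}\circ\Lcal_a=\id_G$, so each $\Lcal_a$ is a bijection with inverse $\Lcal_{a^{-1}}$. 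Symmetrically \textbf{MR1} and \textbf{MR0} make each $\Rcal_u$ bijective, whence $\sigma$ is nondegenerate. The braid relation $\sigma^{12}\sigma^{23}\sigma^{12}=\sigma^{23}\sigma^{12}\sigma^{23}$ I would verify by applying both sides to an arbitrary triple $(a,b,c)\in G\times G\times G$ and comparing the three output coordinates. Each coordinate is rewritten with \textbf{ML1}, \textbf{ML2}, \textbf{MR1}, \textbf{MR2} so as to push the actions across products, and the two sides are then matched using the compatibility condition \textbf{M3}, $uv=({}^uv)(u^v)$, to reconcile the order in which the middle entry is acted upon. This is the standard matched-pair computation; its only difficulty is the bookkeeping.

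For part (2) I would treat uniqueness first. Any braiding $r_G$ on $G$ restricting to $r$ on $X$ must satisfy \textbf{ML0}--\textbf{ML2} and \textbf{MR0}--\textbf{MR2}, and these express the actions on a product, and (via $\Lcal_{a^{-1}}=\Lcal_a^{-1}$) on inverses, in terms of the actions on the factors. Writing $a,u\in G$ as reduced words in $X\cup X^{-1}$ and inducting on word length, the pair $({}^au,a^u)$ is therefore forced by the values of $r$ on $X$; hence $r_G$ is unique if it exists. For existence one defines the two actions on generators through $r$ and extends them to $G$ by these multiplicative rules; the point is to check that the assignment respects the defining relations $\Re$ of $G$, i.e. descends to a well-defined map on $G\times G$. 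This is exactly where the hypothesis that $(X,r)$ is a braided set enters: the braid relation for $r$ guarantees that the two ways of letting an action cross a relation $xy=zt$ coincide. Finally, for the equivalence $(r_G)^2=\id_{G\times G}\iff r^2=\id_{X\times X}$, one direction is immediate, since $r_G$ restricts to $r$ and $r$ maps $X\times X$ into itself, so $(r_G)^2=\id_{G\times G}$ forces $r^2=\id_{X\times X}$; the converse follows because the involutivity identities ${}^{({}^au)}(a^u)=a$ and $({}^au)^{(a^u)}=u$ propagate from the generators to all of $G$ by the same induction on word length.

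For part (3) I would run the identical scheme on the monoid $S=S(X,r)$, following \cite{GIM08}: define $r_S$ on generators by $r$, extend by \textbf{ML1}, \textbf{ML2}, \textbf{MR1}, \textbf{MR2}, and check compatibility with the positive defining relations using the braid relation for $r$; here no inverses occur, so the induction runs over lengths of positive words, and uniqueness again follows from the axioms determining the actions from their restriction to $X$. Throughout, I expect the existence/well-definedness step to be the main obstacle — verifying that the generator-level actions survive the passage to the quotient by $\Re$ — since this is precisely where the braid relation for $r$ is indispensable, whereas nondegeneracy, uniqueness, and the involutivity equivalence are comparatively formal consequences of the action axioms.
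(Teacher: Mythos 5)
Your sketch is correct, but note that the paper itself offers no proof of these Facts: they are recalled verbatim from the cited sources (\cite{LYZ} for parts (1)--(2), \cite{GIM08} for part (3)), and your argument --- nondegeneracy from \textbf{ML0}/\textbf{ML1} and \textbf{MR0}/\textbf{MR1}, the braid relation via the matched-pair computation with \textbf{M3}, uniqueness of $r_G$ and $r_S$ by induction on word length, and existence by checking that the generator-level actions respect the defining relations $\Re$ --- is essentially the standard proof given in those references. In particular, you correctly identify that the only substantive step is well-definedness on the quotient by $\Re$, which is exactly where the braid relation for $r$ is used in \cite{LYZ} and \cite{GIM08}, so there is nothing to correct.
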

\begin{definition}
  \label{assocBraidedGroupdef}
We shall refer to the group $(G,r_G)$ as "\emph{the (involutive) braided group
associated to} $(X,r),$" or "\emph{the symmetric group
associated to} $(X,r)$",
see \cite{GI18}.
\end{definition}
\begin{remark}
  \label{remark1}
It is proven in \cite{GIVB} that if $(X,r)$ is a finite symmetric set then the monoid
  $S=(X,r)$ is of $I$-type, satisfies cancellation law and the \"{O}re conditions.
  $X$ is embedded in $S$ and $S$ is embedded in $G=G(X,r)$ which is its group of quotients of $S$.
  By \cite{GIVB}, Theor. 1.4,  $G$ is a Bieberbach group, or in other words, $G$ is \emph{finitely generated, torsion-free and abelian-by-finite}, \cite{Charlap}. Moreover, $G(X,r)$ is a solvable group, \cite{ESS}, see also \cite{GI18}, Remark 3.10. Valuable systematic information on the properties of $S(X,r)$ and $G(X,r)$ can be found in \cite{JO:book}.
  \end{remark}

Let $(X,r)$ be a symmetric set, with $G = G(X,r)$, and let $Y\subset X$
be a nonempty subset of $X$. $Y$ is \emph{$r$-invariant } if $r(Y
\times Y)\subseteq Y \times Y.$
In this case $r$ induces a solution $(Y, r_Y)$, where $r_Y$ is  the restriction  $r_{\mid Y\times
Y}$. $(Y, r_Y)$ is called \emph{the induced solution (on $Y$)}. The set
$Y $
is \emph{a (left)
$G$-invariant subset of} $X$,
if it is invariant under the
left action of $G$.
Right $G$-invariant subsets are defined analogously. Note that
$Y$ is left $G$-invariant \emph{iff} it is right
$G$-invariant, so we shall refer to it simply as \emph{a
$G$-invariant subset}. Each $G$-invariant subset $Y$ of  $X$ is also
$r$-invariant. Given a symmetric group $(G, r)$ one defines $G$-invariant subsets of $G$, and $r$-invariant subsets of $G$ analogously.

Let $(G, r)$ be a symmetric group (i.e. an involutive braided group), and let $\Gamma= \Gamma_l$ be \emph{the kernel of the left action of $G$ upon itself}.
The subgroup $\Gamma$ is called \emph{the socle} of $(G, r)$.
It satisfies the following conditions:  (i) $\Gamma$ is \emph{an abelian normal subgroup of} $G$, (ii) $\Gamma$ is \emph{invariant with respect to the left and the right actions of $G$ upon
itself}, in other words $\Gamma$  is an ideal of the braided group $(G, r)$, see Definition \ref{ideal_def}. In particular, $\Gamma$ is $r$-invariant.
Note that in a symmetric set $(X,r)$ one has $\Lcal_x = \Lcal_y$ \emph{iff}  $\Rcal_x =
\Rcal_y$, therefore
$\Gamma$ coincides with the kernel $\Gamma_r$  of the right action of $G$:
\begin{equation}
\label{Gammaeq}
\Gamma = \Gamma_l=\{a \in G\mid {}^au = u, \; \forall u \in G\} =
\{a \in G\mid u^a =
u, \; \forall u \in G\}= \Gamma_r.
\end{equation}

\begin{definition}
\label{ideal_def}  \cite{GI18} Let $(G, r)$  be an involutive braided group. A subgroup $H< G$ is \emph{an ideal of $G$} if $H$ is a normal subgroup of $G$
which is  $G$-invariant, that is ${}^GH :=\{{}^gh \mid g \in G, h \in h\} \subseteq H$ (or, equivalently, $H^G : =\{h^g \mid g \in G, h \in h\} \subseteq H$).
\end{definition}
 Recall that if $H$ is an ideal of the involutive braided group $(G,r)$, then the
quotient group $\overline{G}=G/H$ has also a canonical
structure of an involutive braided group $(\overline{G}, r_{\overline{G}})$
induced from $(G, r)$, \cite{GI18}.

\begin{remark}
\label{remark2}
It is known that when $(X,r)$ is a solution with braided group $(G, r_G)$, where $G= G(X,r)$,
there is a group isomorphism  $G/\Gamma \simeq \Gcal =\Gcal(X,r)$, moreover, the group $\Gcal(X,r)$ is finite, whenever $X$ is a finite set, \cite{So}.
In this case $\Gamma$ is a normal abelian subgroup of finite index in $G$, so the group $G$ is \emph{abelian-by-finite}.
Furthermore, see \cite{Takeuchi, GI18}, the braided structure on $(G, r_G)$ induces
the structure of a braided group on the quotient $G/\Gamma \simeq \Gcal =\Gcal(X,r)$,  we use notation $(\Gcal, r_{\Gcal})$. The map $r_{\Gcal}$
is involutive, since $r$ is involutive, so $(\Gcal, r_{\Gcal})$ is also a symmetric
group.
\end{remark}

\section{The symmetric group $G(X,r)$ of a finite square-free symmetric set $(X,r)$ and its group algebra k[G]}
\label{subs_symgrG(x,r)forXfinite square-free}
In this section we prove the main results of the paper, Theorems \textbf{A}, \textbf{B} and \textbf{C}. From now on till the end of the paper we shall work assuming Convention-Notation
\ref{notation}.

\subsection{Square-free solutions and their algebraic objects}
Especially interesting with their symmetries are solutions satisfying additional combinatorial properties, such as \emph{square-free solutions}, or solutions with condition \textbf{lri}, or with \emph{cyclic conditions} (defined and studied in \cite{GI96, GI04, GIM08}).
The finite square-free solutions  $(X,r)$ are closely related with a special class of quadratic PBW algebras called \emph{binomial skew polynomial ring}, see Theorem
\ref{theorem1}.

\begin{definition}
\label{binomialringdef}
\cite{GI96} A {\em binomial skew polynomial ring\/} is an $n$-generated quadratic algebra
 $A=\textbf{k} \langle x_1, \cdots , x_n\rangle/(\Re)$ with
precisely $\binom{n}{2}$ defining relations
\[\Re=\{x_{j}x_{i} -
c_{ij}x_{i^\prime}x_{j^\prime} \mid 1\leq i<j\leq n\},\] such that:
(a) For every pair $i, j, \; 1\leq
i<j\leq n$, the relation $x_{j}x_{i} - c_{ij}x_{i'}x_{j'}\in \Re,$
satisfies $c_{ij} \in \textbf{k}^{\times}$, $j
> i^{\prime}$, $i^{\prime} < j^{\prime}$;
(b) Every ordered monomial $x_ix_j,$
with $1 \leq i < j \leq n$ occurs in  the right hand side of some
relation in $\Re$;
(c) $\Re$ is the
{\it reduced Gr\"obner basis\/} of the two-sided ideal $(\Re)$, with
respect to the degree-lexicographic order $<$ on $\langle X \rangle$, induced by $x_1 < x_2 < \cdots < x_n$,  or equivalently
the ambiguities $x_kx_jx_i,$ with $k>j>i$ do not give rise to new
relations in $A.$
By \cite{Bergman} condition
(c) may be
 rephrased by saying that
($c^{\prime}$) $A=\textbf{k}\langle x_1, \cdots, x_n\rangle/(\Re)$ is \emph{a PBW} algebra with a $\textbf{k}$-basis the set of ordered
monomials:
\begin{equation}
\label{N-eq}
\Ncal = \{x_1^{\alpha_1}\cdots x_n^{\alpha_n} \mid  \alpha_i \geq 0, \; 1\leq i \leq n\}.
\end{equation}
\end{definition}
Recall from  Gr\"{o}bner bases theory that  $\Ncal$ is the set of \emph{normal monomials}, and  each monomial   $u\in S= S(X,r)$ has unique
normal
form $u_0\in \Ncal$, \cite{Bergman, GI96}.
The monoid $S$ (considered as a set) can be identified with the set $\Ncal$.

Algebraic and homological properties of binomial skew polynomial rings and their close relation to YBE were studied in \cite{GI96, GI12,
GIVB}, et all. Recall that each binomial skew polynomial algebra $A$ is a free left (resp. right) module over $A_p$ of finite rank $p^n$ and with a free basis
 the set $Y$, so $A$ is left and right Noetherian, see \cite{GI96}.
Moreover,by \cite{GIVB}, $A$ is an Artin-Schelter regular algebra of global dimension $n$, $A$ is Koszul and a domain.

\begin{remark}
\label{fact_square-free_implies_lri_cc}
 \cite{GI04, GIM08}.
Every  square-free solution  $(X,r)$ satisfies
the condition \textbf{lri}:
 \[ \textbf{lri:}\quad
\quad ({}^xy)^x= y={}^x{(y^x)} \;\text{for all} \quad
x,y \in X.\]
In other words, \textbf{lri} holds if and only if
$(X,r)$ is nondegenerate, $\Rcal_x=\Lcal_x^{-1}$ and $\Lcal_x =
\Rcal_x^{-1}$.
In particular, $(X,r)$ is  uniquely determined by the left
action:
\[r(x,y) = (\Lcal _x(y), \Lcal^{-1}_y(x)).\]
\end{remark}

Define the set $X^{\star}$ as
\begin{equation}
  \label{notationX}
X^{\star}:=X\bigcup X^{-1}, \quad  \text{where} \quad  X^{-1}=\{ x^{-1}\mid x
\in X \}.
  \end{equation}
We shall need the following useful lemma which is extracted from Proposition 7.6. in \cite{GI18}. It generalizes the cyclic conditions and \textbf{lri}.
\begin{lemma}
 \label{VIPLemma07}
Let  $(X,r)$ be a symmetric set with  \textbf{lri},
 and $G=G(X,r)$.
Then the following equalities hold in $G$ for all $k \in  \mathbb{N}, \; x \in
 X^{\star},
 \; a \in G$.
 \[ {}^a{(x^a)}= x =({}^ax)^a,\quad {}^{a^{-1}}x= x^a, \quad x^{a^{-1}} ={}^ax.\]
   \[\; (x^a)^{-1}=(x^{-1})^a, \quad ({}^ax)^{-1}= {}^a{(x^{-1})}. \]
  \[
 {}^{a^x}x= {}^ax,\quad
x^{{}^xa}= x^a,\quad
{}^{(a^{(x^k)})}x= {}^ax,  \quad
 x^{({}^{(x^k)}a)}= x^a.
\]
 \begin{equation}
  \label{lri66}
 {}^{a}{(x^k)}= ({}^{a}x)^k, \quad (x^k)^a = (x^a)^k.
 \end{equation}
  \end{lemma}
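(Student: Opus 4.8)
The plan is to reduce the entire list to two statements about how $G$ acts on the $r$-invariant set $X^{\star}$. First I would record the structural facts: the left action $\Lcal\colon G\to\Sym(X^{\star})$ is a group homomorphism and the right action $\Rcal\colon G\to\Sym(X^{\star})$ is a group anti-homomorphism, which is immediate from \textbf{ML0},\textbf{ML1} and \textbf{MR0},\textbf{MR1}. To see that $X^{\star}$ is actually invariant, apply \textbf{ML2} to $u.u^{-1}=1$ and use \textbf{ML0}: this yields ${}^{a^u}(u^{-1})=({}^au)^{-1}$ for all $a,u\in G$; specialising $u=x\in X$ and letting $a$ range over $G$ (so that $a^{x}$ ranges over all of $G$, since $\Rcal_x$ is bijective) shows ${}^{b}(x^{-1})\in X^{-1}$ for every $b$, and the \textbf{MR2}-mirror gives right-invariance.

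Next, since $\Rcal_x=\Lcal_x^{-1}$ on the generators (this is exactly \textbf{lri}) and $\Rcal_{x^{-1}}=\Rcal_x^{-1}=\Lcal_x=\Lcal_{x^{-1}}^{-1}$, writing $a$ as a word in $X^{\star}$ and comparing the homomorphism $\Lcal$ with the anti-homomorphism $\Rcal$ gives $\Rcal_a=\Lcal_a^{-1}$ as permutations of $X$. This already proves the first displayed line for $x\in X$: ${}^a(x^a)=\Lcal_a\Lcal_a^{-1}(x)=x=\Lcal_a^{-1}\Lcal_a(x)=({}^ax)^a$, while ${}^{a^{-1}}x=\Lcal_a^{-1}(x)=\Rcal_a(x)=x^a$ and $x^{a^{-1}}=\Lcal_a(x)={}^ax$. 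As the second input I take the base cyclic conditions ${}^{({}^{x}y)}x={}^{y}x$ and $x^{({}^{x}y)}=x^{y}$ on $X$, which are available under our hypotheses: square-free solutions, and more generally symmetric sets with \textbf{lri}, satisfy the cyclic conditions, see \cite{GI04, GIM08}.

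With these two inputs the cyclic identities ${}^{a^{x}}x={}^ax$ and $x^{{}^{x}a}=x^a$ follow for all $a\in G$ by induction on the word length of $a$. For the first, write $a=b.y$ with $y$ a generator; by \textbf{MR2} one has $a^{x}=(b^{{}^{y}x})(y^{x})$, so by \textbf{ML1} and the base case ${}^{y^{x}}x={}^{y}x$ (which is the cyclic condition after applying \textbf{lri}) we obtain ${}^{a^{x}}x={}^{b^{x'}}x'$ with $x'={}^{y}x\in X$, and this equals ${}^{b}x'={}^{a}x$ by the induction hypothesis. The second identity is the mirror computation, splitting ${}^{x}a$ by \textbf{ML2}, splitting the exponent by \textbf{MR1}, and using the right base case.

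Everything remaining is bookkeeping on top of these. The inverse-compatibility ${}^a(x^{-1})=({}^ax)^{-1}$ follows from ${}^{c^u}(u^{-1})=({}^cu)^{-1}$ by taking $c={}^{x}y$ and $u=x$, so that $c^{x}=({}^{x}y)^{x}=y$ by \textbf{lri}, and then collapsing $({}^{({}^{x}y)}x)^{-1}$ to $({}^{y}x)^{-1}$ via the cyclic condition; the single-generator case propagates to general $a$ by \textbf{ML1}-induction, and $(x^a)^{-1}=(x^{-1})^a$ is the \textbf{MR2}-mirror. These extend $\Rcal_a=\Lcal_a^{-1}$, hence the first displayed line, from $X$ to all of $X^{\star}$. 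The iterated cyclic identities ${}^{(a^{(x^k)})}x={}^ax$ and $x^{({}^{(x^k)}a)}=x^a$ come from the previous two by induction on $k$ using $a^{x^{k}}=(a^{x^{k-1}})^{x}$ (\textbf{MR1}); and the power rules (\ref{lri66}) follow by induction on $k$ from \textbf{ML2}, since ${}^a(x^k)=({}^ax)({}^{a^x}(x^{k-1}))=({}^ax)({}^ax)^{k-1}$ after applying ${}^{a^{x}}x={}^ax$, with $(x^k)^a=(x^a)^k$ being the \textbf{MR2} analogue. I expect the genuine obstacle to be the previous paragraph together with this twist-removal step: the axioms \textbf{ML2} and \textbf{MR2} unavoidably introduce the twisted exponents $a^{u}$ and ${}^{u}a$, and eliminating them cleanly rests on feeding the cyclic conditions through the inductions in the correct order, in particular reducing every inverse statement to a single generator before inducting on $|a|$.
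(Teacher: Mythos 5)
First, a point of comparison: the paper itself contains no proof of this lemma --- it is imported wholesale (``extracted from Proposition 7.6 in \cite{GI18}''), so your argument is necessarily a reconstruction rather than a variant of an internal proof. Your architecture is essentially the right one, and most of it checks out: $\Lcal$ is a homomorphism and $\Rcal$ an anti-homomorphism into $\Sym(X^{\star})$; applying \textbf{ML2} to $u.u^{-1}=1$ indeed gives ${}^{a^u}(u^{-1})=({}^au)^{-1}$, whence invariance of $X^{-1}$ by nondegeneracy of $\Rcal_x$ on $G$; comparing the homomorphism with the anti-homomorphism letter by letter gives $\Rcal_a=\Lcal_a^{-1}$ on $X$ for every $a\in G$, hence the first displayed line for $x\in X$; and the cyclic conditions on $X$ are correctly available for symmetric sets with \textbf{lri} (involutivity gives ${}^{({}^xy)}(x^y)=x$ and $({}^xy)^{(x^y)}=y$, and composing with \textbf{lri} yields both $x^{({}^xy)}=x^y$ and ${}^{(y^x)}x={}^yx$).

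The gap is in the inductions. Elements of $G$ are words in the alphabet $X^{\star}$, not in $X$, so in the step ``write $a=b.y$ with $y$ a generator'' the generator case must also be covered for $y\in X^{-1}$; and the lemma asserts its identities for $x\in X^{\star}$, while your arguments for the cyclic identities, their iterated versions, and (\ref{lri66}) are carried out only for $x\in X$. Concretely, your induction proving ${}^{a^x}x={}^ax$ needs the base case ${}^{((z^{-1})^x)}x={}^{(z^{-1})}x$ for $z\in X$, which is not among the cited cyclic conditions; similarly the \textbf{ML1}-induction for ${}^a(x^{-1})=({}^ax)^{-1}$ needs ${}^{(y^{-1})}(x^{-1})=\bigl({}^{(y^{-1})}x\bigr)^{-1}$. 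As written, the inductions therefore establish the identities only for $a$ in the submonoid $S(X,r)$ (positive words) and for $x\in X$, which is not enough, since $G$ is a group. These missing cases are derivable with tools you already set up: the \textbf{MR2}-mirror of your own trick, applied to $z.z^{-1}=1$, gives $(z^{-1})^x=(z^{w})^{-1}$ with $w={}^{(z^{-1})}x$, so ${}^{((z^{-1})^x)}x=\Lcal_{z^w}^{-1}(x)$, and this equals $w={}^{(z^{-1})}x$ precisely because ${}^{(z^w)}w={}^zw=x$ by the cyclic condition on $X$; and the $x\in X^{-1}$ cases of the last two displayed lines reduce to the $x\in X$ cases via ${}^a(x^{-1})=({}^ax)^{-1}$ together with $\Rcal_{x^{-1}}=\Rcal_x^{-1}$ (e.g.\ ${}^{(a^{x^{-1}})}x={}^ax$ is your identity ${}^{(b^x)}x={}^bx$ read with $b=a^{x^{-1}}$). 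So the plan is sound and the obstacle sits exactly where you predicted it, but the proof as written is incomplete at precisely those points.
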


\subsection{The braided group $G=G(X,r)$}

\begin{lemma}
\label{S-lemma} Let $(X, r)$ be a square-free solution, of order $|X|= n$ and cyclic degree $p=p(X,r)$, $S=S(X,r)$
is the associated monoid, and $(G, r_G)$ is the associated braided group.
\begin{enumerate}
\item
\label{S-lemma1}
For every $v \in S$ there exist monomials $y\in Y$ and $W, W^{\prime} \in \Ncal_p,$  such that $v=y.W = W^{\prime}.y$ hold in $S$.
\item
\label{S-lemma2}
If $y_1, y_2 \in Y,  \;$  $  W_1, W_2 \in \Ncal_p, $ then
the equality $y_1 W_1= y_2 W_2$ holds in $S$ \emph{iff} $y_1 = y_2$ and   $W_1= W_2$ are equalities of (normal) words in $\Ncal$.
\item
\label{S-lemma3} $N_p$ is the free abelian monoid,  with  a set of free generators  $X_p$.
The group $\Fcal_p$ is isomorphic to the free abelian group in $n$ generators, $\Fcal_p$ is the group of quotient of the monoid $N_p$.
\item
\label{S-lemma4}
$\Fcal_p$ and $N_p$ are invariant under the left action of $G$ upon itself.
\item
\label{S-lemma5}
$\Fcal_p$ is a subgroup of  $\Gamma$, where  $\Gamma$ is the socle of $G$ (see (\ref{Gammaeq})).
\end{enumerate}
\end{lemma}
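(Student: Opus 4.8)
The plan is to bootstrap everything from one structural fact — that each generator $x_i^p$ lies in the socle $\Gamma$ of $(G,r_G)$ — and then to feed this into the already-available module structure of the binomial skew-polynomial ring $A$. So I would prove the five parts in the order (5), (4), (3), (1)--(2), since the later parts all lean on the socle membership.

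I would begin with (5). Because $\Lcal\colon G\to\Sym(X)$ is a group homomorphism, the definition of the cyclic degree (Definition \ref{Gcaldef}) gives $\Lcal_{x_i^p}=(\Lcal_{x_i})^p=\id_X$. By Remark \ref{remark2} the socle is exactly the kernel of the action on $X$, $\Gamma=\ker(\Lcal\colon G\to\Sym(X))$: the isomorphism $G/\Gamma\simeq\Gcal$ is induced by $\Lcal$, and the $G$-action on $G$ restricts on $X\subseteq G$ to the canonical action $\Lcal$. Hence $x_i^p\in\Gamma$ for every $i$, and since $\Gamma$ is a subgroup, $\Fcal_p={}_{gr}[x_1^p,\dots,x_n^p]\le\Gamma$. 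This yields two facts used repeatedly below: $\Fcal_p$ is abelian (the socle is abelian), and every $W\in\Fcal_p$ acts trivially on both sides, ${}^Wu=u$ and $u^W=u$ for all $u\in G$, since $\Gamma=\Gamma_l=\Gamma_r$ by (\ref{Gammaeq}).

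Next I would prove (4). By (\ref{lri66}) of Lemma \ref{VIPLemma07}, ${}^a(x_i^p)=({}^ax_i)^p$ for all $a\in G$; as $X$ is invariant under the left action, ${}^ax_i\in X$, so ${}^a(x_i^p)\in X_p$ and the left action permutes $X_p$. To pass from generators to arbitrary elements of $N_p$ and $\Fcal_p$ I use \textbf{ML2}: for $W_1\in\Fcal_p\subseteq\Gamma_r$ one has $a^{W_1}=a$, so ${}^a(W_1W_2)=({}^aW_1)({}^{a^{W_1}}W_2)=({}^aW_1)({}^aW_2)$, i.e. ${}^a$ is multiplicative along $\Fcal_p$; likewise $a^W=a$ gives ${}^a(W^{-1})=({}^aW)^{-1}$ for inverses. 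Induction on word length then yields ${}^aN_p\subseteq N_p$ and ${}^a\Fcal_p\subseteq\Fcal_p$. For (3), I first note $x_i^px_j^p=x_j^px_i^p$ in $G$ because $\Gamma$ is abelian, and these equalities descend to $S$ since $S\hookrightarrow G$; thus every element of $N_p$ reorders into the form $x_1^{p\alpha_1}\cdots x_n^{p\alpha_n}$, which is already a normal monomial in $\Ncal$, so uniqueness of normal forms pins down the $\alpha_i$ and $N_p$ is the free abelian monoid on $X_p$. Since $G$ is torsion-free (Remark \ref{remark1}), the finitely generated abelian group $\Fcal_p$ is free abelian; it contains the rank-$n$ free abelian monoid $N_p$ and is generated by $X_p\cup X_p^{-1}$, hence has rank exactly $n$ and is the group of quotients of the commutative cancellative monoid $N_p$.

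Finally, (1) and (2) follow by reading off the known module structure of $A$ at the level of monomials. Recall (from the remark preceding Lemma \ref{VIPLemma07}, via \cite{GI96}) that $A=\textbf{k}S$ is a free right (resp. left) $A_p$-module with free basis $Y$; since $A_p=\textbf{k}N_p$ has $\textbf{k}$-basis $\Ncal_p$, the products $\{y.W\mid y\in Y,\ W\in\Ncal_p\}$ (resp. $\{W'.y\}$) form a $\textbf{k}$-basis of $A$ consisting of single elements of $S$. Comparing with the standard monomial basis $\Ncal=S$ forces the assignment $(y,W)\mapsto y.W$ to be a bijection $Y\times\Ncal_p\to S$: its surjectivity is precisely the existence statement (1) (with the two-sided form $v=W'.y$ coming from the left-module basis, or from (1) together with \textbf{M3} using ${}^yW\in N_p$ and $y^W=y$), and its injectivity is precisely the uniqueness statement (2), equalities in $S$ being equalities of normal words. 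The main obstacle I anticipate is making (5) airtight — that triviality of the action on $X$ genuinely forces membership in the socle, i.e. the identification $\Gamma=\ker(\Lcal\colon G\to\Sym(X))$ and the compatibility of the $G$-on-$G$ action with the $G$-on-$X$ action; once $\Fcal_p\le\Gamma$ is secured, the abelianness and two-sided triviality of $\Fcal_p$ reduce the remaining parts to bookkeeping with \textbf{ML2}/\textbf{M3} and the uniqueness of Gröbner normal forms.
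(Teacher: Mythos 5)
Your proposal is correct, but it takes a genuinely different route from the paper, and in the reverse logical order. The paper proves (1)--(2) by citing the monomial-level Lemma 4.19 of \cite{GI96} directly (noting only that the argument there needs $(\Lcal_x)^p=\id_X$ rather than $p=n!$), takes (3) as immediate, proves (4) from the identity ${}^{a}{(x^p)}=({}^{a}x)^p$ of Lemma \ref{VIPLemma07} together with \textbf{ML2} alone --- no socle input is needed, since in the \textbf{ML2} expansion each factor is acted on by \emph{some} element of $G$ and the set $X_p$ was shown invariant under all of $G$ --- and only then proves (5) by a self-contained induction on word length, with base case on $X^{\star}$ handled by Lemma \ref{VIPLemma07} and inductive step by \textbf{ML1}/\textbf{ML2}. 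You instead anchor everything on the identification $\Gamma=\ker(\Lcal\colon G\to\Sym(X))$, which turns (5) into a one-liner and lets your (4) exploit the two-sided triviality $a^W=a$. That identification is the correct reading of Remark \ref{remark2} (it is how Soloviev's isomorphism arises), and if one insists on deducing it purely from what the remark literally asserts, it follows from the easy inclusion $\Gamma\subseteq\ker\Lcal$ (the $G$-action on $G$ restricts on $X$ to $\Lcal$) plus finiteness: $G/\Gamma\simeq\Gcal\simeq G/\ker\Lcal$ forces $[\ker\Lcal:\Gamma]=1$. Similarly, your (1)--(2) invert the logic of \cite{GI96}: you recover the monomial bijection $Y\times\Ncal_p\to S$ from the free-module theorem by comparing the $\textbf{k}$-basis $\{yW\}$ with the monomial basis $\Ncal$, whereas in \cite{GI96} the freeness is obtained \emph{from} the monomial statements; this is not circular within the present paper, since both facts are quotable there, but your derivation rests on the deeper of the two. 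The trade-off is clear: your route is shorter and more structural, while the paper's route is self-contained modulo Lemma \ref{VIPLemma07} --- which matters, because the socle identification for the braided group $G(X,r)$ is nowhere proved in this paper, and the inductive, \textbf{lri}-based argument the paper gives is essentially the hidden content of the facts you quote.
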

\begin{proof}
Part (1) and (2) of our lemma are analogous to  \cite{GI96}, Lemma 4.19, (1) and (3).
Note that the lemma in \cite{GI96} is proven for $p= n!$ but the argument there needs only the equality $(\Lcal_x)^p= id_X,$ for all $x
\in X$,
which is true whenever $p=p(X,r)$ is the cyclic degree of $(X.r)$.
(3). It is clear that $N_p$ is the free abelian monoid with a set of free generators $X_p$, $N_p$ is (canonically) embedded in $\Fcal_p$ which is its group of quotients.
 So $\Fcal_p$ is isomorphic to the free abelian group generated by $X_p$.
 Every $W \in \Fcal_p$ can be written uniquely as
\begin{equation}
\label{proofeqW}
W = x_1^{(k_1p)} x_2^{(k_2p)} \cdots .x_n^{(k_np)},\quad\text{where  }   k_i \in   \mathbb{Z},
\; 1 \leq i \leq n.
\end{equation}
(\ref{S-lemma4}).
We have to show that $N_p$, and $\Fcal_p$ are invariant under the left action of $G$ upon itself.
 We use equality (\ref{lri66}), in Lemma \ref{VIPLemma07}, in which
we replace $k$ with $p$ to obtain
  \[
 {}^{a}{(x^p)}= ({}^{a}x)^p,  \quad \forall \; x \in
 X^{\star},\; a \in G.
\]
Lemma  \ref{VIPLemma07} implies also that for every pair  $a \in G,$ $x \in X^{\star}$, one has ${}^a{(x^{-1})}= ({}^ax)^{-1}$.
Due to the nondegeneracy, the left action of $G$ on $X$ permutes the elements $x_1, \cdots, x_n$, so there are equalities of sets
\begin{equation}
\label{proofeqW3}
\begin{array}{llr}
{}^{a}{X} &=\{{}^{a}{x_1}, {}^{a}{x_2},\cdots, {}^{a}{x_n} \}= \{x_1, x_2,\cdots, x_n \}= X,  &\forall a \in G\\
{}^{a}{(X_p)} &=\{ {}^{a}{(x_1^p)}, {}^{a}{(x_2^p)},\cdots, {}^{a}{(x_n^p)} \}= \{x_1^p, x_2^p,\cdots, x_n^p \}= X_p, &\forall a \in G\\
{}^{a}{(X^{-1})_p} & = \{{}^{a}{((x_1^{-1})^p)},\cdots, {}^{a}{((x_n^{-1})^p)}\}= \{(x_1^{-1})^p,\cdots, (x_n^{-1})^p\}= X^{-1}_p, &\;\forall a \in G.
\end{array}
\end{equation}
Hence the sets $X_p$
and $X^{-1}_{p}=  \{(x_1^{-1})^p, \cdots, (x_n^{-1})^p\}= \{x_1^{-p}, \cdots, x_n^{-p}\}$ are invariant under the left action of  $G$. Now we apply
\textbf{ML2} to yield
 \[ {}^aW \in N_p,  \; \forall \;  W\in N_p,\; a \in G,\quad \text{and} \quad {}^aW \in \Fcal_p, \; \forall \; W \in \Fcal_p,\; a \in G. \]
 It follows that the submonoid $N_p$ and the subgroup $\Fcal_p$ of $G$ are $G$-invariant.

(\ref{S-lemma5}). We have to show that
$\Fcal_p \leq \Gamma$, where  $\Gamma$ is the socle of $G$.
We shall use induction on the length $m = |a|$ of $a \in G$ (see Definition \ref{algobjectsdef} (ii)) to prove
\begin{equation}
\label{proofeqW0}
{}^Wa = a, \quad \forall\;  W \in \Fcal_p,\; a \in G.
\end{equation}
Every element of $G$ is a word in the alphabet $X^{\star}$. For the base of the induction, $m = 1$, we have to show
\begin{equation}
\label{proofeqW1}
{}^Wx = x, \quad \forall\;  W \in \Fcal,\; x \in X^{\star}.
\end{equation}
Let $x \in X$. Recall that each square-free solution $(X,r)$ satisfies \textbf{lri}, so the hypothesis of Lemma \ref{VIPLemma07} is satisfied.
By the choice of $p$, and condition \textbf{lri} one has $(\Lcal_x)^p= id_X = (\Rcal_x)^p$.
Using Lemma \ref{VIPLemma07} (and \textbf{lri} again) we show that the following equalities hold in $G$, for all $k \in   \mathbb{Z}, \; x,y \in X^{\star}$:
\begin{equation}
\label{proofeqW2}
{}^{(y^p)}x = x = x^{(y^p)}, \quad \;
{}^{(y^{-p})} x = {}^{{(y^p)}^{-1}} x = x^{(y^p)} = x, \quad \;
{}^{(y^{(kp)})} x = x.
\end{equation}
  Suppose $W \in \Fcal_p$ then $W = x_1^{k_1p}x_2^{k_2p} \cdots x_n^{k_np}$, for some $k_i \in   \mathbb{Z}$, so condition \textbf{ML1} and (\ref{proofeqW2}) imply
 (\ref{proofeqW1}) which  gives the base for the induction.
 Suppose ${}^Wa = a$, for all $W \in \Fcal_p$, and all $a \in G, \; |a|\leq m$. Let $u \in G$, has length $|u| =m+1,$ so $u = x.a, x\in X^{\star}, |a|= m.$
 We apply \textbf{ML2}, and the inductive assumption to yield
 \[
{}^W{u}= {}^W{(x.a)} =({}^Wx)({}^{W^x}a) = x. ({}^{W^x}a)= x.a = u.\]
Therefore ${}^W{u}= u, \forall u \in G$, so $W \in\Gamma$, which implies $\Fcal_p \leq \Gamma$.
\end{proof}

\begin{prooftheoremA}
\textbf{(\ref{Thm_Fcal41})}. We shall use some of the facts in Remark \ref{remark1}.
 The monoid $S$ has cancelation law, satisfies \"{O}re conditions, and is embedded in $G= G(X,r)$, which is its
group of quotients. So every element  $u \in G$ has the shape  $u =ab^{-1},$ where $a, b \in S$.
 By Lemma \ref{S-lemma}, part (1),  the elements $a$, and $b$ can be presented as
  $a = y_1 W_1$ and $b= W_2 y_2$, where $y_1, y_2 \in Y,$ and $W_1, W_2 \in \Ncal_p$.
Then $b^{-1} = (y_2)^{-1} W_2^{-1},$ where $W_2^{-1}\in \Fcal_p$.
First we shall present $(y_2)^{-1}$ in the form $(y_2)^{-1}= t W^{\prime}$,   with  $t \in S,\; W^{\prime}\in \Fcal_p$.
Since $y_2\in Y$ one has
\begin{equation}
\label{proofeq0}
y_2 = x_1^{\alpha_1}\cdots x_n^{\alpha_n},\quad\text{where  } 0 \leq \alpha_i \leq p-1,\; 1 \leq i \leq n.
\end{equation}
Note that for any $x \in X$, and any integer $\alpha$ with $0 \leq \alpha \leq p-1$, there is an obvious  equality
\[
x^{-\alpha}  = x^{p-\alpha}x^{-p}= x^{\beta}x^{-p},\; \text{where  } \;    0 \leq \beta = p-\alpha\leq p-1.
\]
This, together with Lemma \ref{S-lemma}, part (1) imply
\[
\begin{array}{lll}
y_2 ^{-1}
 &= x_n^{-\alpha_n}\cdots x_1^{-\alpha_1} =  (x_n^{p-\alpha_n}. x_n^{-p}) \cdots (x_1^{p-\alpha_1}. x_1^{-p})&\\
   &=  (x_n^{\beta_n}x_n^{-p})  \cdots (x_1^{\beta_1}x_1^{-p})\;
   =   (x_n^{\beta_n}  \cdots x_1^{\beta_1})W^{\prime} & =  t\;W,  \quad t \in Y, \;  W\in \Fcal_p.
 \end{array}
\]
Therefore
\[
b^{-1} = y_2^{-1} W_2^{-1}= (t.W)W_2^{-1}
  = t (W.W_2^{-1}) \; = t W_3, \;\; t \in Y, \;  W_3 =W.W_2^{-1}\in \Fcal_p.
\]
The following equalities hold in $G$:
\[
\begin{array}{rll}
ab^{-1} &= (y_1W_1)(tW_3) = y_1(W_1t)W_3
  = y_1 ({}^{W_1}t)((W_1)^t) W_3,\quad &  \; (W_1)^t,   W_3\in \Fcal\\
  &= (y_1t) W_4, \;\; \quad \quad W_4  = ((W_1)^t)W_3 \in \Fcal_p, \; & \text{since }\; {}^{W_1}t =t\\
  &=  (y W_0)W_4, \;   \;  y W_0:= y_1t,  \;\; y\in Y, \; W_0 \in \Ncal_p,\quad & \text{by Lemma \ref{S-lemma}}\\
  &=  y W,   \quad   \quad\; \; y\in Y, \; W: = W_0W_4 \in \Fcal_p.&
 \end{array}
\]
We have shown that every element $u \in G$ can be presented as
$
u = y W, \; y\in Y, \; W\in \Fcal_p.
$
We claim that this presentation is unique. Suppose
$y_1 W_1 = y_2 W_2$ holds in $G$, where $y_1, y_2 \in Y,$  and $W_1, W_2 \in \Fcal_p.$
Every element $W \in \Fcal_p$ has a presentation as $W = uv^{-1},$ where
$u,v \in N_p$. Without loss of generality we may assume  $u,v \in \Ncal_p$.
We use the equalities
$W_1 = u_1v_1^{-1}, \; W_2 = u_2v_2^{-1}$, where $u_i, v_i\in \Ncal_p, i = 1,2$,
and obtain $y_1 u_1v_1^{-1} =y_1W_1  = y_2W_2 = y_2u_2v_2^{-1}$, hence
\[y_1 u_1v_1^{-1} = y_2u_2v_2^{-1}.\]
The elements of $\Fcal_p$ is commute, hence multiplying both sides of this equality by $v_1v_2$, we obtain
\[
y_1 (u_1v_2) = y_2(u_2v_1),  \; \; \text{where} \; y_1, y_2 \in Y, \;\text{  and  } \; u_1v_2,\; u_2v_1\in N_p.
\]
It follows then from Lemma \ref{S-lemma} that the equalities $y_1 = y_2$, and  $u_1v_2 = u_2v_1$ hold in $N_p $, and in
$\Fcal_p$.
Next we multiply both sides of $u_1v_2 = u_2v_1$ by $v_1^{-1}v_2^{-1}$, to
obtain  $W_1 = u_1v_1^{-1} = u_2v_2^{-1}= W_2\in \Fcal_p$, which proves part (\ref{Thm_Fcal41}).
Parts (\ref{Thm_Fcal42}) and (\ref{Thm_Fcal43}) are straightforward, in particular
 (\ref{disjuneq}) is in force. $\quad \quad \quad \quad\quad \quad \quad \quad \quad \quad \quad\quad \quad\quad\Box$
\end{prooftheoremA}

\begin{prooftheoremB}
\textbf{(\ref{Thm_Fcal2a})}.  We have to show that $\Fcal_p $ is an ideal of $G$ that is, a $G$-invariant normal subgroup of $G$.
By Lemma \ref{S-lemma}  $\Fcal_p$ is a $G$-invariant subgroup of $\Gamma$. It remains to prove
that $\Fcal_p$ is a normal subgroup of $G$.
Suppose $W \in \Fcal_p, a \in G$.
We use the compatibility condition  \textbf{M3}, and the equality $a^W= a$ (since $\Fcal_p < \Gamma$)  to yield:
\[(aW)a^{-1}= ({}^aW)(a^W)a^{-1}=({}^aW) (aa^{-1}) = {}^aW \in \Fcal_p. \] So  $\Fcal_p$ is a normal subgroup of $G$, and therefore it is an ideal of $G$. Clearly,
$\Fcal_p \;\triangleleft \;\Gamma\;\triangleleft\; G$ is a chain of ideals in the braided group $G$.
\textbf{(\ref{Thm_Fcal06a})}. We have shown that $\Fcal_p $ is an ideal of $G$, hence the quotient group $\widetilde{G}= G/\Fcal_p$ is an involutive braided group of order
$|\widetilde{G}|=[G: \Fcal_p].$
It follows straightforwardly from Theorem \textbf{A} that
 the index of $\Fcal_p$ in $G$ is exactly
 $[G: \Fcal] =|Y| = p^n,$ hence $|\widetilde{G}|= p^n$.
 Moreover, \cite{GI18}, the canonical projection
$\pi: G \longrightarrow \widetilde{G}= G/\Fcal_p, \quad g = y W \mapsto \tilde{g}=\tilde{y},$ induces the structure of an involutive braided group
on the finite set \[\widetilde{G} = \tilde{Y} :=\{\tilde{y}\mid y \in Y\},\]
and the induced operation "$\cdot$" on $\tilde{Y}$ satisfies $(\ref{operationeq})$. It is clear that the restriction
 $\pi_{|Y} : Y \longrightarrow \tilde{Y} =\widetilde{G}$ is a bijective map, in other words $Y$ is embedded in $\widetilde{G}$, and so is $X \subset Y$.
 We identify $X$ with its image $\tilde{X}$ in $\widetilde{G}$. Clearly the set $\tilde{X}$ generates  $\widetilde{G}$. We have proven parts (\ref{Thm_Fcal06a}) and  (\ref{Thm_Fcal06b}). \textbf{(\ref{Thm_Fcal06c}).}
Consider the chain of ideals  $\Fcal_p \;\ra \;\Gamma \;\ra \;G.$
The 3rd isomorphism theorem for braided groups, see  \cite{GI18}, Remark 4.13, and the well-known group isomorphism   $\Gcal\simeq (G/\Gamma)$ imply
\[ \Gcal \simeq (G/\Gamma) \simeq (G/\Fcal_p)/(\Gamma/\Fcal_p),\;\text{and }\quad |G:\Gamma||\Gamma: \Fcal_p|=|G: \Fcal_p|= p^n. \]
It follows that the order $|\Gcal|=|G:\Gamma|$ divides
 $p^n$. In particular, if $p$ is a prime, then  the permutation group $\Gcal$
is a $p$-group.    $\quad \quad \quad \quad\quad \quad \quad \quad \quad \quad\quad \quad \quad \quad \quad\quad\Box$
\end{prooftheoremB}
\begin{corollary}
Let $(X,r)$ be a finite square-free solution of order $|X|=n$ with cyclic degree $p=p(X,r)$.
Then there exists a finite involutive braided group $(B, \sigma)$ of order $p^n$, such that $X$ is embedded in $B$ and generates the group $B$.
Moreover, $X$ is $B$-invariant, and $r=\sigma_{|X\times X}$ is the restriction of $\sigma$ on $X\times X$.
\end{corollary}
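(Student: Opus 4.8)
The plan is to realise the desired pair $(B,\sigma)$ as the quotient braided group $\widetilde{G}=G/\Fcal_p$ equipped with its induced braiding $r_{\widetilde{G}}$, so that the Corollary becomes a direct repackaging of Theorem \textbf{B}. First I would set $(B,\sigma):=(\widetilde{G},\, r_{\widetilde{G}})$. By Theorem \textbf{B}(\ref{Thm_Fcal06a}), $\Fcal_p$ is an ideal of $(G,r_G)$, hence $\widetilde{G}$ carries a canonical structure of involutive braided group, and its order equals the index $[G:\Fcal_p]=|Y|=p^n$; this settles finiteness and the order claim at once.

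Next I would extract the embedding and the generation statement straight from Theorem \textbf{B}(\ref{Thm_Fcal06b}): since $X\subseteq Y$ and the restriction $\pi_{|Y}:Y\to\widetilde{G}$ is a bijection, the restriction $\pi_{|X}$ is injective, so $X$ embeds in $B$; identifying $X$ with $\tilde{X}:=\pi(X)$, the set $\tilde{X}$ generates $\widetilde{G}$. The only two assertions not stated verbatim in Theorem \textbf{B} are the $B$-invariance of $X$ and the identity $r=\sigma_{|X\times X}$, and both will follow from the fact that the canonical projection $\pi:G\to\widetilde{G}$ is an epimorphism of braided groups, so it intertwines the left and right actions and the braidings.

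For invariance, I would use the equalities ${}^aX=X$ for all $a\in G$ recorded in Lemma \ref{S-lemma} (see (\ref{proofeqW3})): given $\tilde{a}\in\widetilde{G}$, write $\tilde{a}=\pi(a)$ and compute ${}^{\tilde{a}}\tilde{x}=\pi({}^ax)\in\pi(X)=\tilde{X}$, because ${}^ax\in X$; thus $\tilde{X}$ is $B$-invariant. For the braiding, recall from Facts \ref{factLYZ}(2) that $r_G$ restricts to $r$ on $X\times X$; since $\pi$ commutes with the braidings, for $x,y\in X$ one gets $\sigma(\tilde{x},\tilde{y})=(\pi\times\pi)(r_G(x,y))=(\pi\times\pi)({}^xy,\,x^y)=(\widetilde{{}^xy},\,\widetilde{x^y})$, which under the identification $X=\tilde{X}$ reads $({}^xy,\,x^y)=r(x,y)$; hence $\sigma_{|X\times X}=r$.

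I do not expect a genuine obstacle here: the entire content is already carried by Theorem \textbf{B}, and the residual verifications reduce to the functoriality of $\pi$ as a morphism of braided groups. The one point deserving care is to check that $\pi_{|X}$ is genuinely injective and that the identification $X=\tilde{X}$ is compatible simultaneously with the group multiplication (governed by (\ref{operationeq})) and with the braiding; this is exactly what the bijectivity of $\pi_{|Y}$ together with the intertwining property of $\pi$ guarantee, so no separate argument beyond the two displayed computations above is needed. $\quad\Box$
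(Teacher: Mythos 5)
Your proposal is correct and takes exactly the route the paper intends: the corollary appears immediately after Theorem \textbf{B} with no separate proof, being read off as a direct consequence with $(B,\sigma)=(\widetilde{G},r_{\widetilde{G}})$, where $\widetilde{G}=G/\Fcal_p$. The two residual verifications you supply --- the $B$-invariance of $\tilde{X}$ via the equalities ${}^aX=X$ from Lemma \ref{S-lemma}, and $\sigma_{|X\times X}=r$ via the fact that $\pi$ is a morphism of braided groups together with Facts \ref{factLYZ}(2) --- are precisely the details the paper leaves implicit, and they are sound.
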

Theorem \textbf{C} (4) shows that the group ring $\textbf{k}[G]$ is left and right Noetherian which implies the following.
\begin{corollary}
\label{Thm_Fcal5a}  $G$ is a Noetherian group, that is every subgroup of $G$ is finitely generated.
\end{corollary}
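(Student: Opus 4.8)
The plan is to derive the maximal condition on subgroups of $G$ from the fact, established in Theorem \textbf{C}, that the group algebra $\textbf{k}[G]$ is right Noetherian, using the classical order-preserving correspondence between subgroups of $G$ and right ideals of $\textbf{k}[G]$. First recall that a group is \emph{Noetherian} exactly when it satisfies the ascending chain condition on subgroups, and that this is equivalent to every subgroup being finitely generated: if every subgroup is finitely generated, then the ascending union $\bigcup_i H_i$ of a chain $H_1 \subseteq H_2 \subseteq \cdots$ is generated by finitely many elements, each already lying in some $H_i$, whence the chain stabilizes; the converse is immediate. It therefore suffices to prove that $G$ satisfies the ascending chain condition on subgroups.

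To this end I would attach to each subgroup $H \leq G$ the right ideal of $\textbf{k}[G]$ generated by the elements $h-1$, namely
\[\mathfrak{r}_H := \sum_{h \in H}(h-1)\,\textbf{k}[G],\]
and identify it as the kernel of the right $\textbf{k}[G]$-module homomorphism $\pi_H : \textbf{k}[G]\longrightarrow \textbf{k}[H\backslash G]$ sending a group element $g$ to its right coset $Hg$ in the coset permutation module. A short computation gives $\ker\pi_H = \mathfrak{r}_H$: the inclusion $\mathfrak{r}_H\subseteq\ker\pi_H$ holds since $\pi_H((h-1)g)=Hhg-Hg=0$, while any element of $\ker\pi_H$ has coefficient sum zero along each coset $Hg_0$ and hence, writing each $g\in Hg_0$ as $hg_0$, decomposes as $\sum_{h\in H} a_{hg_0}(h-1)g_0 \in \mathfrak{r}_H$.

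This identification makes $H \mapsto \mathfrak{r}_H$ \emph{strictly} order-preserving. Indeed $H_1\subseteq H_2$ trivially forces $\mathfrak{r}_{H_1}\subseteq\mathfrak{r}_{H_2}$; conversely, if $\mathfrak{r}_{H_1}\subseteq\mathfrak{r}_{H_2}$ then for each $h\in H_1$ we have $h-1\in\mathfrak{r}_{H_2}=\ker\pi_{H_2}$, so $H_2h=H_2$ and $h\in H_2$, giving $H_1\subseteq H_2$. Consequently any strictly ascending chain of subgroups of $G$ produces a strictly ascending chain of right ideals of $\textbf{k}[G]$; since $\textbf{k}[G]$ is right Noetherian by Theorem \textbf{C}, the latter chain, and therefore the former, must stabilize, which is the desired conclusion. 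The argument is essentially routine, the only point requiring care being the kernel computation that yields strict monotonicity (together with the harmless bookkeeping of left/right conventions, since Theorem \textbf{C} supplies Noetherianity on both sides). I note that one could alternatively bypass the group algebra entirely: by Theorem \textbf{A} the group $G$ contains the free abelian subgroup $\Fcal_p$ of finite index $p^n$ and is thus polycyclic-by-finite, a class long known to satisfy the maximal condition on subgroups.
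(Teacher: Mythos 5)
Your proof is correct and takes essentially the same route as the paper: the paper deduces the corollary directly from the left and right Noetherianity of $\textbf{k}[G]$ established in Theorem \textbf{C}, which is exactly the implication you prove in detail via the right ideals $\mathfrak{r}_H = \sum_{h\in H}(h-1)\textbf{k}[G]$ and their identification as kernels of the coset permutation maps. The paper asserts this implication without proof, so your augmentation-ideal argument (together with the polycyclic-by-finite alternative you mention, which also works since $\Fcal_p$ has finite index) merely supplies the classical details it leaves implicit.
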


The following result is well-known, it can be extracted from Remarks \ref{remark1} and \ref{remark2}, but here it is an independent and straightforward consequence from Theorem B.
\begin{corollary}
\label{Thm_Fcal}
Under the hypothesis of Theorem \textbf{B}. The group $G=G(X,r)$ is a finitely-generated torsion-free, abelian-by-finite solvable group, and therefore $G(X,r)$ is torsion-free
\emph{poly-$\mathbb{Z}$-by-finite}.
\end{corollary}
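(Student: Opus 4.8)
The plan is to assemble the four asserted properties, invoking Theorem \textbf{B} for the parts that follow cleanly from it and Remark \ref{remark1} for the two substantive facts that rest on the finer $I$-type structure. \emph{Finite generation} is immediate: by Definition \ref{algobjectsdef} the group $G={}_{gr}\langle X;\Re\rangle$ is generated by the finite set $X$, where $|X|=n$. The core new input is the \emph{abelian-by-finite} property, and here I would argue directly from Theorem \textbf{B}. By Theorem \textbf{B}(\ref{Thm_Fcal2a}) the subgroup $\Fcal_p$ is an ideal of $(G,r_G)$, in particular a normal subgroup of $G$; by Theorem \textbf{B}(\ref{Thm_Fcal06a}) it has finite index $[G:\Fcal_p]=|\widetilde{G}|=p^n$; and by Lemma \ref{S-lemma}(\ref{S-lemma3}) the group $\Fcal_p$ is isomorphic to the free abelian group of rank $n$. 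Hence $G$ possesses a free abelian normal subgroup of finite index, which is precisely the assertion that $G$ is abelian-by-finite.

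For \emph{torsion-freeness} and \emph{solvability} I would appeal to Remark \ref{remark1}: by \cite{GIVB} the monoid $S(X,r)$ is of $I$-type and $G$ is its group of quotients, which is a Bieberbach group and therefore torsion-free, while by \cite{ESS} the group $G$ is solvable. I want to emphasize that torsion-freeness is the one point that does \emph{not} follow formally from the preceding items, and this is the step I expect to be the genuine obstacle. Indeed, an abelian-by-finite group with a free abelian normal subgroup of finite index may perfectly well contain torsion: the infinite dihedral group $\mathbb{Z}\rtimes\mathbb{Z}/2$ has $\mathbb{Z}$ as a free abelian normal subgroup of index $2$ yet is full of involutions. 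Thus torsion-freeness must be read off from the Bieberbach/$I$-type geometry (equivalently, from the free, properly discontinuous action underlying the $I$-type structure) and cannot be extracted from the index computation of Theorem \textbf{B} alone; solvability is likewise a genuine theorem of \cite{ESS}, since a finitely generated abelian-by-finite group is solvable only when its finite quotient is, and finite point groups of crystallographic groups need not be solvable in general.

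Finally I would deduce the \emph{poly-$\mathbb{Z}$-by-finite} conclusion. Since $\Fcal_p\cong\mathbb{Z}^n$ by Lemma \ref{S-lemma}(\ref{S-lemma3}), the chain $1\triangleleft\mathbb{Z}\triangleleft\mathbb{Z}^2\triangleleft\cdots\triangleleft\mathbb{Z}^n=\Fcal_p$ is a subnormal series with infinite cyclic factors, so $\Fcal_p$ is poly-$\mathbb{Z}$; being normal of finite index $p^n$ in $G$, it exhibits $G$ as poly-$\mathbb{Z}$-by-finite. Combining this with the torsion-freeness established above yields that $G$ is torsion-free poly-$\mathbb{Z}$-by-finite, which completes the argument. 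Here the qualifier \emph{torsion-free} is not redundant, since poly-$\mathbb{Z}$-by-finite groups may have torsion, as the infinite dihedral group again illustrates; it is exactly the Bieberbach input of the second paragraph that upgrades the conclusion to the torsion-free case.
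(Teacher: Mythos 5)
Your proof is correct. It is worth noting how it sits relative to the paper: the paper offers no written proof of this corollary at all, only the sentence preceding it, which asserts that the result ``can be extracted from Remarks \ref{remark1} and \ref{remark2}, but here it is an independent and straightforward consequence from Theorem B.'' Your argument delivers exactly the part of that claim which is sustainable: finite generation is trivial, and the abelian-by-finite and poly-$\mathbb{Z}$-by-finite statements do follow from Theorem B together with Lemma \ref{S-lemma}~(\ref{S-lemma3}), since $\Fcal_p\cong\mathbb{Z}^n$ is normal of finite index $p^n$. Your further observation is a genuine, and correct, correction to the paper's framing: torsion-freeness and solvability cannot be ``independent'' consequences of Theorem B, because a free-abelian-by-finite group may have torsion (the infinite dihedral group), and because $p=p(X,r)$ need not be prime, so the order $p^n$ of $\widetilde{G}$ does not by itself force solvability of the finite quotient $G/\Fcal_p$ --- and solvability of $G$ reduces exactly to solvability of that quotient. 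For these two properties you revert to Remark \ref{remark1}, i.e.\ to the external inputs of \cite{GIVB} ($G$ is a Bieberbach group, hence torsion-free) and \cite{ESS} ($G$ is solvable) --- precisely the sources the paper says the corollary ``can be extracted from.'' In short, your proof is the paper's intended argument made precise: it isolates what Theorem B really yields and documents the two points where the external theory is indispensable.
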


\subsection{The group algebra  $\textbf{k}[G]$ over a field \textbf{k}}
One of the famous and still open problems about abstract
group rings is the \emph{Kaplansky zero divisor conjecture} which is closely related to two more conjectures, the unit conjecture (U) and the idempotent conjecture (I) given below.
\begin{conjectures}
(I. Kaplansky)
Let $\textbf{k}$ be a field, let $G$ be a torsion-free group.
\begin{enumerate}
\item[(Z)]
The group ring $\textbf{k}[G]$  is a domain.
\item[(I)]
$\textbf{k}[G]$ does not contain any non-trivial idempotents: if $a^2 = a$, then $a = 1$ or $a = 0$.
\item[(U)]
$\textbf{k}[G]$ has no non-trivial units: if $ab = 1$ in $\textbf{k}[G]$, then $a = \alpha.g$ for some $\alpha \in \textbf{k}^{\times}$ and $g \in G$.
\end{enumerate}
\end{conjectures}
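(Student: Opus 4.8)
The plan is to read Kaplansky's conjectures for the concrete group $G=G(X,r)$ of the paper off the domain property already in hand, and to isolate the unit conjecture as the genuinely hard point. For the zero-divisor conjecture (Z) I would not argue abstractly but reuse the Ore machinery recorded in Remark \ref{remark1}. By Theorem \ref{theorem1}(I) the Yang--Baxter algebra $A=A(\textbf{k},X,r)\cong \textbf{k}S$ is a Noetherian domain, and $S=S(X,r)$ is a left and right Ore monoid satisfying cancellation whose group of quotients is exactly $G$. Hence $\textbf{k}[G]$ is the Ore localization of $\textbf{k}S=A$ at the multiplicative set $S$ of regular elements, and a localization of a domain at an Ore set of regular elements is again a domain. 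This yields (Z) directly and re-proves the domain assertion already contained in Theorem \textbf{C}.

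The idempotent conjecture (I) is then immediate and needs no new input: in any domain an idempotent $a$ satisfies $a(a-1)=0$, forcing $a=0$ or $a=1$. Thus for $\textbf{k}[G]$ the conjecture (Z) formally implies (I), and both drop out of the structure theory already developed.

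The unit conjecture (U) is the main obstacle, and the natural line of attack goes through the free-module description of Theorem \textbf{C}(1). Writing a putative unit as $u=\sum_{y\in Y} y\,f_y$ with $f_y\in\textbf{k}[\Fcal_p]$, one uses that $\Fcal_p\cong\Z^n$ and that $\textbf{k}[\Fcal_p]=\textbf{k}[x_1^{\pm p},\dots,x_n^{\pm p}]$ is a Laurent polynomial ring whose units are all trivial, i.e. of the form $\alpha\,x_1^{pk_1}\cdots x_n^{pk_n}$ with $\alpha\in\textbf{k}^{\times}$. One would then try to descend a unit of $\textbf{k}[G]$ to a unit of this Laurent ring by a support or augmentation argument along the finite quotient $\widetilde G=G/\Fcal_p$ of order $p^n$ from Theorem \textbf{B}. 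The difficulty lies precisely here: $\textbf{k}[G]$ is a crossed product of the trivial-unit ring $\textbf{k}[\Fcal_p]$ by the finite group $\widetilde G$, and finite quotients are exactly the feature that can create nontrivial units even when the base ring has none. Consequently (U) does not follow from the module structure alone; it would require showing that $G$ is a unique-product group (for instance bi-orderable or diffuse), a property strictly stronger than being a torsion-free poly-$\Z$-by-finite group and in general false for Bieberbach groups. I therefore expect (Z) and (I) to be routine consequences of the preceding theorems, while (U) is the essential obstruction that these methods cannot settle for an arbitrary square-free solution.
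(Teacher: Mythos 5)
You should first be clear that this ``statement'' is not a result proved in the paper at all: it is Kaplansky's trio of conjectures for an \emph{arbitrary} torsion-free group, stated purely as background, and the paper offers no proof of it (nor could it --- (Z) and (I) remain open in that generality). What the paper actually establishes is the specialization to its own group $G=G(X,r)$, in Lemma \ref{VIPprop} together with Remarks \ref{Thm_Fcal2G06} and \ref{Thm_Fcal2aa}, and you correctly reoriented your attempt toward exactly that reading, so the right comparison is with those.

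For (Z) your route is genuinely different from the paper's. Lemma \ref{VIPprop} deduces that $\textbf{k}[G]$ is a domain from the fact that $G$ is torsion-free polycyclic-by-finite, citing Farkas--Snider in characteristic $0$ and Kropholler--Linnell--Moody over an arbitrary field. You instead localize: $A=\textbf{k}S$ is a Noetherian domain by Theorem \ref{theorem1}, $S$ is a cancellative Ore monoid whose group of fractions is $G$, hence $\textbf{k}[G]\cong AS^{-1}\subseteq \mathcal{Q}(A)$, and a localization of a domain at an Ore set of regular elements is again a domain. This is sound --- the identification $AS^{-1}\cong\textbf{k}[G]$ needs the standard common-denominator and linear-independence argument, which cancellation in $S$ supplies --- and it is in fact latent in the paper's own Remark \ref{Thm_Fcal2aa}, which records the chain $A\hookrightarrow \textbf{k}[G]\hookrightarrow\mathcal{Q}(A)$ without exploiting it for this purpose. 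Your argument trades the deep group-ring theorems for the (nontrivial, but already assumed) result of \cite{GIVB} that $A$ is a domain; the paper's route buys more, namely finite global dimension in characteristic $0$, which localization alone does not give. Your derivation of (I) from (Z) is the same triviality the paper alludes to with $(U)\Rightarrow(Z)\Rightarrow(I)$. Finally, your diagnosis of (U) matches the paper exactly: Remark \ref{Thm_Fcal2G06} claims (U) only for multipermutation solutions, via Chouraqui's left-orderability, and you are right that the crossed-product structure of $\textbf{k}[G]$ over the finite quotient $\widetilde{G}$ cannot by itself exclude nontrivial units --- some unique-product or orderability input is indispensable, and indeed torsion-free poly-$\mathbb{Z}$-by-finite groups can fail the unique product property (Promislow's Bieberbach group), over which the unit conjecture was later disproved by Gardam in characteristic $2$.
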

It is well-known that $(U)\Longrightarrow (Z)\Longrightarrow (I)$. There are numerous positive results, in particular, the zero divisor conjecture is known to hold for all virtually solvable group.
\begin{remark}
    \label{Thm_Fcal2G06}
It is known that the group $G=G(X,r)$ of every finite solution $(X,r)$ satisfies (I), the zero-divisor conjecture. This follows from the works \cite{Farkas}, and \cite{KLM}, see for details the proof of Lemma \ref{VIPprop}.
 Moreover, in the special case when $(X,r)$ is a finite multipermutation solution,
   \cite{Chouraqui16}, Theorem 2, states that the group $G(X,r)$ is left orderable, and therefore $G$  satisfies Kaplansky's unit conjecture (U).
\end{remark}
 \begin{remark}
    \label{Thm_Fcal2aa}
In usual notation and conventions, for $S = S(X,r), G=G(X,r)$, the canonical embedding $S \hookrightarrow G$ induces a natural embedding  of the semigroup algebra $A = \textbf{k}S$ into the group algebra $\textbf{k}[G]$. One has $A= \textbf{k}S \hookrightarrow \textbf{k}[G]\hookrightarrow \mathcal{Q}(A),$ where $\mathcal{Q}(A)$ is the canonical right quotient ring of the domain $A$.
\end{remark}
\begin{lemma}
\label{VIPprop}
 Let $G= G(X,r)$ be the braided group of a finite nondegenerate symmetric set $(X,r)$, let $\textbf{k}$ be a field. Then the group algebra \textbf{k}[G] is a left (respectively right) noetherian domain.
Moreover, $\textbf{k}[G]$ has finite global dimension, whenever the field $\textbf{k}$ has characteristic $0$.
\end{lemma}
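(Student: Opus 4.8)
The plan is to deduce all three assertions from classical structural results about group rings, using what has already been established about $G=G(X,r)$. The key input is recorded in Remark \ref{remark1}: for a finite symmetric set $(X,r)$ the group $G$ is finitely generated, torsion-free and abelian-by-finite, and in fact solvable; equivalently (see Corollary \ref{Thm_Fcal}) $G$ is torsion-free poly-$\Z$-by-finite. Thus $G$ contains a normal subgroup $\Gamma$ of finite index which is a finitely generated torsion-free abelian group, i.e. free abelian of some finite rank. I would organize the argument around this normal abelian subgroup of finite index.

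First I would settle that $\textbf{k}[G]$ is left and right Noetherian. For this I invoke the standard theorem (Hall; see e.g. \cite{JO:book}) that if $N \triangleleft G$ with $\textbf{k}[N]$ Noetherian and $G/N$ a polycyclic-by-finite (here simply finite) group, then $\textbf{k}[G]$ is Noetherian on both sides. Taking $N=\Gamma$, the algebra $\textbf{k}[\Gamma]$ is a commutative Laurent polynomial ring in finitely many variables, hence Noetherian, and $G/\Gamma\simeq\Gcal$ is finite; therefore $\textbf{k}[G]$ is left and right Noetherian. (Alternatively one can read this directly off Remark \ref{Thm_Fcal2aa}, since $A=\textbf{k}S$ is already known to be Noetherian and $\textbf{k}[G]$ is its localization, but the crossed-product argument is the cleanest.)

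Next I would prove that $\textbf{k}[G]$ is a domain. Since $G$ is torsion-free and virtually solvable (indeed abelian-by-finite solvable), this is exactly the setting in which the Kaplansky zero-divisor conjecture is a theorem: by the work of Farkas and of Kropholler--Linnell--Moody, referenced in Remark \ref{Thm_Fcal2G06} as \cite{Farkas} and \cite{KLM}, the group ring of a torsion-free virtually solvable group over any field has no zero divisors. Hence $\textbf{k}[G]$ is a domain. Finally, for the global dimension statement I would use that $G$ is poly-$\Z$-by-finite: a torsion-free poly-$\Z$ (poly-infinite-cyclic) group has finite cohomological dimension equal to its Hirsch length, so $\textbf{k}[\Gamma']$ has finite global dimension for the poly-$\Z$ subgroup $\Gamma'$ of finite index; in characteristic $0$ the order of the finite quotient is invertible, so passing to the finite-index overgroup $G$ preserves finite global dimension (by the standard comparison of projective dimensions under finite extensions, Maschke-type averaging making $\textbf{k}[G]$ a separable-type extension of $\textbf{k}[\Gamma']$). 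This gives $\mathrm{gl.dim}\,\textbf{k}[G]<\infty$ when $\mathrm{char}\,\textbf{k}=0$.

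The main obstacle is not any single step but the need to cite the correct classical black boxes and to verify their hypotheses against the properties of $G$. The Noetherian and zero-divisor parts require only that $G$ be finitely generated abelian-by-finite and torsion-free virtually solvable, which are already in hand. The delicate point is the global dimension claim: one must ensure that the characteristic-$0$ hypothesis is genuinely used to clear denominators when pushing finite global dimension up along the finite-index inclusion $\Gamma'\leq G$, since in positive characteristic the finite quotient $G/\Gamma'$ can contribute infinite global dimension. I would therefore state the global dimension conclusion only in characteristic $0$, exactly as in the lemma.
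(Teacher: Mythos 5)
Your proposal is correct and follows essentially the same route as the paper: both deduce the lemma from the structure of $G$ as a finitely generated torsion-free abelian-by-finite (hence polycyclic-by-finite) solvable group, combined with the classical group-ring theorems of Hall (Noetherian), Farkas--Snider and Kropholler--Linnell--Moody (no zero divisors, finite global dimension in characteristic $0$). The only cosmetic differences are that you run the Noetherian step through the finite-index free abelian subgroup $\Gamma$ rather than citing polycyclicity of $G$ directly, and you unpack Farkas--Snider's syzygy lemma into its standard proof (finite global dimension of $\textbf{k}[\Gamma']$ for a poly-$\mathbb{Z}$ subgroup $\Gamma'$ of finite index, plus Maschke-type averaging over the index, which is invertible in characteristic $0$) instead of citing it as a black box.
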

\begin{proof}
We discussed, see Remarks \ref{remark1} and \ref{remark2}, that the braided group $G=G(X,r)$ is a torsion-free finitely generated
abelian-by-finite solvable  group. Therefore $G$ is polycyclic. In fact, the group $G$ is (poly-$\mathbb{Z}$)-by-finite.
Now we simply apply several already classical results from the theory of group rings.
Hall proved that the group ring $\textbf{k}[G]$  of a polycyclic group $G$ is right (resp. left) Noetherian, \cite{Hall} Theorem 1.
Farkas and Snider,  showed that if $G$ is a torsion free, polycyclic-by-finite group and  $\textbf{k}$ is a field of characteristic $0$, then (1) the group ring $\textbf{k}[G]$ has no zero divisors, see \cite{Farkas}, Main Theorem; and (2) $\textbf{k}[G]$ has finite global dimension, \cite{Farkas}, Lemma 1 ("Syzygy" theorem) .
More generally, Kropholler, Linell and Moody, prove the zero divisor conjecture for a large class of torsion-free groups which contains all torsion-free solvable-by-finite groups, and for an arbitrary field $\textbf{k}$, see \cite{KLM}, Theorem 1.3.
It follows from the above discussion that for every field $\textbf{k}$ the group ring $\textbf{k}[G]$ is a left (respectively right) Noetherian domain. $\textbf{k}[G]$
has finite global dimension, whenever $\textbf{k}$ is a field of characteristic $0$.
\end{proof}

\begin{prooftheoremC}
 \textbf{ (\ref{Thm_Fcal4}).}
$\Fcal_p$ is the free abelian group in $n$-generators $x_1^p, \cdots, x_n^p$, then it is well-known the group ring $\textbf{k}[\Fcal_p]$ is a Noetherian domain.
 It follows from Theorem A that the group algebra
$\textbf{k}[G]$ is a left module of finite rank
over the algebra $\textbf{k}[\Fcal_p]$, generated by $Y= \{y_i\mid 1 \leq i \leq p^n\}$, and therefore
$\textbf{k}[G]$ is left Noetherian. Analogously one shows that $\textbf{k}[G]$ is right Noetherian.
We have to prove that $\textbf{k}[G]$ is \emph{a free left} $\textbf{k}[\Fcal_p]$-\emph{module with a free basis} $Y$.
To simplify notation set $q = p^n$, and assume that there is a relation:
\begin{equation}
\label{proofeq06}
g_1 y_1 +g_2 y_2 +\cdots + g_qy_q =0, \quad \text{where} \; g_i \in \textbf{k}[\Fcal_p], \; 1 \leq i \leq q.
\end{equation}
But $\{g_1, \cdots, g_s\} \subset \textbf{k}[\Fcal_p]$ is a finite set, and each of its elements is (a finite) linear combination of words $W \in \Fcal_p$, therefore one can find a monomial $W_0 \in N_p$, such that
$W_0. g_i = a_i \in \textbf{k} S_p= A_p, \;1 \leq i\leq q$. We multiply (\ref{proofeq06}) (on the left) by $W_0$, and obtain the following relation in the monoidal algebra
$A= \textbf{k} S$:
\begin{equation}
\label{proofeq07} a_1 y_1 +a_2 y_2 +\cdots + a_qy_q =0, \quad \text{where} \; a_i \in \textbf{k}[S_p], \; 1 \leq i \leq q.
\end{equation}
We have proven that the algebra $A$ is a free left module over $A_p= \textbf{k}S_p$ with a free-basis $Y$, see \cite{GI96}, Theorem I, (for more details see the proof of Lemma (4.20)).
Therefore $a_i= 0, \; 1 \leq i \leq q$. But $a_i= W_0.g_i$, for all $i$'s, and since $\textbf{k}[G]$ has no zero divisors (by Lemma Lemma \ref{VIPprop}), we obtain $g_i= 0,\; 1 \leq i \leq q$.
It follows that $Y$ is a free basis of $\textbf{k}[G]$, considered as a left $\textbf{k}[\Fcal_p]$- module.
  \textbf{(\ref{Thm_Fcal2}).} Recall that a polynomial in n variables $f(X_1, X_2,\cdots , X_n)\in \textbf{k}[X_1, X_2,\cdots , X_n]$
is \emph{a symmetric polynomial} if for any permutation $\sigma$ of the subscripts $1, 2,\cdots, n$ one has
$f(X_{\sigma(1)}, X_{\sigma(2)}, \cdots, X_{\sigma(n)}) = f(X_1, X_2,\cdots , X_n)$.
It is well-known that any symmetric polynomial can
be presented as a polynomial expression with coefficients from $\textbf{k}$ in the first $n$ \emph{power sum symmetric polynomials}
$ s_k(X_1, X_2,\cdots , X_n), 1\leq k \leq n$, where
 \[
 s_k(X_1, X_2,\cdots , X_n)= X_1^k+X_2^k + \cdots + X_n^k,\quad k \geq 1.
 \]
In particular, the remaining power sum polynomials $s_k(X_1, X_2,\cdots , X_n)$  for $k > n$, can be also expressed in the first $n$ power sum polynomials.
So, the set of all symmetric polynomials in $\textbf{k}[X_1, X_2,\cdots , X_n]$ is an $n$-generated subalgebra $\mathfrak{S}= \textbf{k}\langle s_1 \cdots, s_n\rangle$
of the commutative polynomial ring $\textbf{k}[X_1, X_2,\cdots , X_n]$.
We know that the subalgebra $A_p =\textbf{k}[x_1^p, x_2^p,\cdots , x_n^p] \subset \textbf{k} S$ is isomorphic to the commutative polynomial ring
$\textbf{k}[X_1, X_2,\cdots , X_n]$ in $n$ variables (we may consider the isomorphism extending the assignment $x_i^p \rightarrow X_i, 1 \leq i \leq n$).
Consider now the subalgebra $\mathfrak{S}= \mathfrak{S}(x_1^p, \cdots, x_n^p)$ consisting of all symmetric polynomials $f(x_1^p, \cdots, x_n^p)\in A _p$.
$\mathfrak{S}$ is generated (as a $\textbf{k}$-algebra) by the power sums
\[s_k= s_k((x_1)^p, \cdots , (x_n)^p)= (x_1^p)^k + \cdots + (x_n^p)^k =(x_1)^{kp} + \cdots +(x_n)^{kp}. \]
To show that $\mathfrak{S}$ is in the center of $\textbf{k}[G]$ it suffices to verify the following equalities for all $y \in X, \; 1 \leq k \leq n$:
\[
\begin{array}{l}
s_k(x_1^p, \cdots , x_n^p). y = y. s_k(x_1^p, \cdots , x_n^p), \quad
s_k(x_1^p, \cdots , x_n^p).(y^{-1}) = (y^{-1}). s_k(x_1^p, \cdots , x_n^p).
\end{array}
\]
By the nondegeneracy of the right action there is an equality of sets $\{x_1^y,\cdots , x_n^y\}= X, \forall y \in X$. The following equalities hold in $\textbf{k}[G]$:
\[
\begin{array}{lll}
s_k .y& = ((x_1)^{kp} + \cdots +(x_n)^{kp}).y= ((x_1)^{kp}).y + \cdots +((x_n)^{kp}).y& \\
     &= ({}^{x_1^{kp}}y).((x_1)^{y})^{kp}+\cdots +({}^{x_n^{kp}}y).((x_n)^{y})^{kp}\;&\text{by \textbf{M3}}\\
      &= (y).((x_1)^{y})^{kp}+\cdots + (y).((x_n)^{y})^{kp}& \text{by}\;{}^{x^{kp}}y = y, \forall x \in X\\
      &= y.(((x_1)^{y})^{kp}+\cdots + ((x_n)^{y})^{kp})& \\
      &=y.((x_1)^{kp}+\cdots + ((x_n)^{kp}))= y .s_k,&
\end{array}
\]
which verify $s_k.y = y.s_k$, for all $k \geq 1$ and all $y \in X.$
In particular,  this implies that the subalgebra $\mathfrak{S}$ is in the centre of the monoidal algebra $A= \textbf{k} S$.
The proof of the equality  $s_k .(y^{-1})= (y^{-1}).s_k, \; y \in X,\; k \geq 1$, is analogous. In this case one uses the additional relations
given by Lemma \ref{VIPLemma07}, more specifically
${}^a {(y^{-1})}=({}^ay)^{-1}$
and $a^{(y^{-1})} = {}^ya, \; \forall y \in X,\; a \in G$. This proves part (\ref{Thm_Fcal2}).
 Part  (\ref{Thm_Fcal5}) follows from Lemma \ref{VIPprop}. Note that we have shown in part (1)  that
 $\textbf{k}[G]$ is left and right Noetherian, independently from Lemma \ref{VIPprop}.
    $\quad \quad \quad \quad \quad \quad \quad \quad \quad\quad \quad\quad \quad\quad \quad\quad \quad\quad \Box$
   \end{prooftheoremC}

{\bf Acknowledgments}. This paper was written during my visit at the Max Planck Institute for Mathematics in the Sciences (MiS) in Leipzig, in 2018-2019.
It is my pleasant duty to thank MiS, and Bernd Sturmfels for the support and the creative, inspiring
atmosphere. I thank all colleagues and friends, the people from the wonderful MiS library, and Saskia Gutzschebauch for the cooperation.

\end{document}